\documentclass[11pt,reqno]{amsart}
\usepackage[T1]{fontenc}
\usepackage{graphicx}

\usepackage{color}
\definecolor{MyLinkColor}{rgb}{0,0,0.4}

\newcommand{\R}{{\mathbb R}}

\newcommand{\N}{{\mathbb N}}

\newcommand{\s}{\mathbb{S}}

\newcommand{\cF}{\mathcal{F}}

\newcommand{\kL}{\mathcal{L}}
\newcommand{\cC}{\mathcal{C}}

\newcommand{\la}{\langle}
\newcommand{\ra}{\rangle}

\newcommand{\ov}{\overline}

\newcommand{\p}{\partial}

\newcommand{\e}{\varepsilon}

\newcommand{\0}{\Omega}

\newcommand{\tr}{\mathop{\rm tr}\nolimits}

\newcommand{\spa}{\mathop{\rm span}\nolimits}
\newcommand{\im}{\mathop{\rm Im}\nolimits}
\newcommand{\ke}{\mathop{\rm Ker}\nolimits}

\newtheorem{thm}{Theorem}[section]
\newtheorem{prop}[thm]{Proposition}
\newtheorem{defn}[thm]{Definition}
\newtheorem{lemma}[thm]{Lemma}
\newtheorem{cor}[thm]{Corollary}
\theoremstyle{remark} 
\newtheorem{rem}[thm]{Remark}

\setlength{\oddsidemargin}{7.5mm}
\setlength{\evensidemargin}{7.5mm}
\setlength{\textwidth}{159mm}
\setlength{\textheight}{216mm}

\numberwithin{equation}{section}

\title[Capillary-gravity water waves with bounded vorticity]{Capillary-gravity water waves with discontinuous vorticity: existence and regularity results}

\author[A.--V. Matioc]{Anca--Voichita Matioc}
\address{Institut f\" ur Mathematik, Universit\" at Wien, Nordbergstra{\ss}e 15,
1090 Wien, Austria}
\email{anca.matioc@univie.ac.at}

\author[B.--V. Matioc]{Bogdan--Vasile Matioc}
\address{Institut f\" ur Mathematik, Universit\" at Wien, Nordbergstra{\ss}e 15,
1090 Wien, Austria}
\email{bogdan-vasile.matioc@univie.ac.at}

\subjclass[2010]{35J60, 76B03, 76B45,  47J15}
\keywords{Local bifurcation; bounded vorticity; capillarity-gravity waves; real-analytic streamlines}


\usepackage[colorlinks=true,linkcolor=MyLinkColor,citecolor=MyLinkColor]{
hyperref} 

\begin{document}

\begin{abstract}
In this paper we construct  periodic capillarity-gravity water waves with an arbitrary  bounded  vorticity distribution.
This is achieved by reexpressing, in the height function formulation  of the water wave problem, the boundary condition obtained from Bernoulli's principle   as a  
nonlocal differential equation. 
This enables us to establish the existence of weak solutions of the problem by using  elliptic estimates and bifurcation theory. 
Secondly, we investigate the a priori regularity  of these weak solutions and  prove that they are in fact strong solutions of the problem, describing waves with a 
real-analytic free surface.
Moreover, assuming merely integrability of the vorticity function, we show that any weak solution  corresponds to flows having real-analytic streamlines.
\end{abstract}

\maketitle

\section{Introduction}\label{Sec:1}

This paper is concerned  with periodic  capillary-gravity water waves traveling over a homogeneous fluid and having an arbitrary bounded vorticity distribution.
Our study is motivated by the physical setting of wind generated waves  which  possess a thin layer of high vorticity   \cite{PB74}, or even high vorticity regions 
beneath the wave crests  \cite{O82}.
On the other hand, in the near-bed region there may exist strong tidal currents which  interact with the water waves and contribute so to  the transportation of sediments \cite{PC84}. 
The plethora of  phenomena resulting from the  wave-current interactions makes the study of rotational water waves so interesting,  cf. \cite{Con11, Jon, Thom}.
Indeed, for irrotational waves in the absence of an underlying
current the fluid velocity, the pressure, and the particle
paths in the flow present very regular features that can
be described qualitatively even for waves of large
amplitude (see \cite{C12, Co06, Um12}). However, already within the
setting of irrotational steady waves with an
underlying uniform current one encounters new particle
path patterns, cf. \cite{CoSt10, HH12}, while the behavior of the
velocity field and of the pressure is considerably altered
by an underlying current of constant non-zero vorticity.
For rotational waves the most dramatic changes (in the form of critical layers)
are triggered by the presence of stagnation points in the
flow but  even in the absence of stagnation points
significant changes occur (see \cite{ SP88}). A discontinuous
vorticity enhances these departures from features that
hold within the irrotational regime, as indicated by
the numerical simulations in  \cite{KO1,KO2}.

On the basis of a rigorous theory,  exact periodic gravity water waves with a discontinuous vorticity have been shown to exist in \cite{CS11} by making use of a weak 
formulation of the water wave problem.
Subsequently, capillary-gravity water waves interacting with several vertically superposed and linearly sheared currents of different  vorticities have been constructed in \cite{CM13xx}, 
by regarding the height function formulation of the hydrodynamical problem as 
a diffraction problem.
We develop herein a rigorous existence theory for capillary-gravity water waves with a bounded general  vorticity function, some of the analysis   in \cite{CM13xx}
serving as a preliminary step.   

The existence of exact  capillary-gravity water waves was first established in the irrotational setting  \cite{MJ89, JT85, JT86, RS81}, the existence theory  
for rotational waves being developed more recently
in the setting of waves with constant vorticity, stagnation points, and possibly with overhanging profiles  \cite{CM12x}  (see also \cite{CV11, CM13xxx}),
or for waves with a general H\"older continuous vorticity distribution  \cite{W06b}.
Many papers are also dedicated to the study of the properties of  capillary-gravity water waves and of the flow beneath  them, such as the regularity of the
wave profile and that of the streamlines \cite{Hen10, DH11a, LW12x, AM12x, WZ12},
or the description of the particle paths \cite{DH07}. 

The first goal of this paper is to establish the existence of two-dimensional capillary-gravity water waves with an  arbitrary bounded  vorticity and without stagnation points.
This is achieved by using the height function formulation of the water wave problem and by defining a suitable notion of weak solution for this problem.
Reexpressing the boundary condition obtained from Bernoulli's law as a nonlocal boundary condition, we   obtain a  new  equivalent formulation of the problem 
which enables us to consider the  existence problem of weak solutions
in an abstract bifurcation setting.
Using elliptic theory \cite{GT01} and local bifurcation tools \cite{CR71}, we then establish the existence of infinitely many bifurcation branches consisting 
of non-laminar weak solutions of the hydrodynamical problem.
Our second goal is to determine the a priori regularity properties  of the weak  solutions in the case when the vorticity function is merely integrable.
This problem is in the setting of rotational waves very recent \cite{AC11, Esch-reg12}, but its implications are very important when studying the symmetry properties of water waves.
More precisely, in view of  \cite[Theorem  3.1  and Remark 3.2]{MM13} and \cite[Corollary 1.2]{EM13x} the following statement holds true:
\begin{itemize}\item[] \em Within the set of all periodic gravity waves without stagnation points
 the symmetric waves with one crest and trough per period 
are  characterized by the property that all the streamlines have a global minimum on the same vertical line.
\end{itemize}
We emphasize that   the   gravity waves with only one crest and trough per period are symmetric waves \cite{CoEhWa07, CoEs04_b, okamoto-shoji-01}.
The availability of Schauder estimates for the new formulation of the problem stands at the basis of our regularity result where we state that  the streamlines and 
the wave profiles corresponding to such  weak solutions are real-analytic graphs.
As a particular case, we establish the real-analyticity of the streamlines also for  pure capillary water waves, generalizing previous results \cite{DH12, LW12x}.
Our regularity result could serve as a tool when studying the symmetry of waves with capillary effects, the symmetry   problem being in this setting still open.
Besides, the additional regularity properties of the weak solutions help us to prove that the weak solutions that we have found are in fact strong solutions, and 
even classical if the vorticity function is   continuous.

The outline of the paper is as follows: we start the   Section \ref{Sec:2} by presenting the   mathematical model and  state at the end the main existence result Theorem \ref{MT}.
In Section \ref{Sec:3} we derive a new formulation of the water wave problem, which we recast in Section \ref{Sec:4} as a nonlinear and nonlocal problem. 
After proving the existence of local bifurcation branches of weak solutions for the  latter problem in Theorem \ref{T:LB}, we study in Section \ref{Sec:5} 
the a priori regularity of the weak solutions  when assuming merely integrability of the vorticity function, cf. 
Theorem \ref{MT2} and Corollary \ref{C:1}.
We conclude the paper with the proof of Theorem \ref{MT}.

\section{The mathematical model and the existence result}\label{Sec:2}

\paragraph{\bf The mathematical model}
We start by presenting three equivalent mathematical models which describe the propagation of  periodic   water waves   over a rotational, 
inviscid, and incompressible fluid, under the influence of  gravity and capillary  forces.
The waves that we consider are  two-dimensional and they travel  at constant  speed $c>0$.
In a reference frame which moves in the same direction as the wave and with the same speed $c$
 the equations of motion are the steady-state Euler equations
\begin{subequations}\label{eq:P}
  \begin{equation}\label{eq:Euler}
\left\{
\begin{array}{rllll}
({u}-c) { u}_x+{ v}{ u}_y&=&-{ P}_x,\\
({ u}-c) { v}_x+{ v}{ v}_y&=&-{ P}_y-g,\\
{ u}_x+{v}_y&=&0.
\end{array}
\right.\qquad \text{in $\0_\eta.$}
\end{equation}
We have assumed that the free surface of the wave is the graph $y=\eta(x),$ that the fluid has constant  density, set to be   $1$,  and that the fluid bed is located at $y=-d$. 
Hereby, $d>0$ is the average mean depth of the fluid, meaning that  the fluid domain is 
\[
\0_\eta:=\{(x,y)\,:\,\text{$ x\in\s $ and $-d<y<\eta(x)$}\},
\]
whereby $\s$ is the unit circle.
This notation is used to express the fact that the  function $\eta,$ the velocity field $ (u, v), $ and the pressure $P$ are $2\pi$-periodic in $x.$
Since we incorporate the effect of surface tension in our problem, the equations  \eqref{eq:Euler} are supplemented by the following boundary conditions
\begin{equation}\label{eq:BC}
\left\{
\begin{array}{rllll}
{ P}&=&{P}_0-\sigma\eta''/(1+\eta'^2)^{3/2}&\text{on $ y=\eta(x)$},\\
{ v}&=&({ u}-c) \eta'&\text{on $ y=\eta(x)$},\\
{ v}&=&0 &\text{on $ y=-d$},
\end{array}
\right.
\end{equation}
with ${ P}_0$ denoting the constant atmospheric pressure and $\sigma>0$ being the surface tension coefficient. 
Moreover, the  vorticity of the flow is the scalar function 
\begin{equation}\label{vor}
\omega:= { u}_y-{ v}_x\qquad\text{in $\ov\0_\eta$.}  
\end{equation}
\end{subequations}
The goal of this paper is to prove  the existence   of solutions of the problem \eqref{eq:P} in the class
\begin{align}\label{Reg1}
\eta\in C^{2-}(\s), \quad u,v,P\in C^{1-}(\ov\0_\eta), \quad \omega\in L_\infty(\0_\eta),
\end{align}
and to study their additional regularity properties. 
Hereby we may identify the spaces  $C^{k-}(\s) $ and $C^{k-}(\ov\0_\eta)$,   $1\leq k\in \N$, which contain functions that have Lipschitz continuous  derivatives of order $k-1$, 
with $W^k_\infty(\s) $ and $W^{k}_\infty(\0_\eta)$, respectively, cf. \cite{EG92}.

The problem \eqref{eq:P} can also be formulated  in terms of  the stream function $\psi:\ov\0_\eta\to\R$, which is given by 
\[
\psi(x,y):=-p_0+\int_{-d}^y({ u}(x,s)-c)\, ds\qquad \text{for $(x,y)\in\ov\0_\eta$}.
\]
It follows readily from this formula  that $\psi\in C^{2-}(\ov\0_\eta)$ satisfies $\nabla\psi=(-{ v},{ u}-c).$ 
Additionally, it can be shown that the problem \eqref{eq:P} is equivalent to the following  free boundary problem
\begin{equation}\label{eq:psi}
\left\{
\begin{array}{rllll}
\Delta \psi&=&\gamma(-\psi)&\text{in}&\0_\eta,\\
|\nabla\psi|^2+2g(y+d)-\displaystyle2\sigma\frac{\eta''}{(1+\eta'^2)^{3/2}}&=&Q&\text{on} &y=\eta(x),\\
\psi&=&0&\text{on}&y=\eta(x),\\
\psi&=&-p_0&\text{on} &y=-d,
\end{array}
\right.
\end{equation}
cf. \cite{Con11, CoSt04, BM11}.
We emphasize that the first  boundary condition in \eqref{eq:psi} is obtained from Bernoulli's  principle which states that the total energy
\[E:=\frac{(u-c)^2+v^2}{2}+g(y+d)+P-\int_0^{\psi}\gamma(-s)\, ds\]
is constant in $\ov\0_\eta.$
In \eqref{eq:psi}, the constant $p_0<0$ represents the relative mass flux,   $Q\in\R$ is related to the so-called total head,
and the function $\gamma$ is the vorticity function.
The existence of the vorticity function is obtained under the additional assumption that the horizontal velocity of each fluid particle is less than the wave speed  
\begin{equation}\label{eq:cond1}
{ u}-c<0\qquad\text{in $\ov \0_\eta$.}
\end{equation}
Indeed, the relation \eqref{eq:cond1} together with \eqref{eq:Euler} imply, cf. \cite{CoSt04, BM11}, that there exists a function $\gamma\in L_\infty((p_0,0))$ such that 
$\omega(x,y)=\gamma(-\psi(x,y))$ almost everywhere in $\0_\eta$.
 
Assuming \eqref{eq:cond1},  the  stream function formulation \eqref{eq:psi} can be reexpressed in terms of the so-called height function.
Indeed, the assumption \eqref{eq:cond1}, ensures that the mapping
 $\Phi:\ov\0_\eta\to\ov\0$  given by
\[
\Phi(x,y):=(q,p)(x,y):=(x,-\psi(x,y))\qquad \text{for $(x,y)\in\ov\0_\eta$},
\]
whereby $\0:=\s\times(p_0,0),$ 
is a diffeomorphism of class $C^{2-}$.
Consequently, the height function   $h:\ov \0\to\R$ defined by  $h(q,p):=y+d$ for $(q,p)\in\ov\0$ belongs to $C^{2-}(\ov\0)=W^2_\infty(\0)$ and
 it solves the nonlinear boundary value problem
\begin{equation}\label{PB}
\left\{
\begin{array}{rllll}
(1+h_q^2)h_{pp}-2h_ph_qh_{pq}+h_p^2h_{qq}-\gamma(p)h_p^3&=&0&\text{in $\0$},\\
\displaystyle 1+h_q^2+(2gh-Q)h_p^2-2\sigma \frac{h_p^2h_{qq}}{(1+h_q^2)^{3/2}}&=&0&\text{on $p=0$},\\
h&=&0&\text{on $ p=p_0,$}
\end{array}
\right.
\end{equation}
together with  the condition 
\begin{equation}\label{PBC}
 \min_{\ov \0}h_p>0.
\end{equation}
We stress at this point   that the function $h$ associates to each point $(q,p)\in\ov\0$ the value of the height of fluid particle $(x,y)=\Phi^{-1}(q,p)$ above the flat bed.
Particularly, the wave profile is parametrized by the map $\eta=h(\cdot,0)-d,$ implying   that $h(\cdot,0)\in C^{2-}(\s)$.
With this observation,  the boundary condition of \eqref{PB} on $p=0$ is also meaningful.
In fact, each streamline of the steady flow corresponds to a level curve of $\psi$ and is therefore  parametrized by the function  $h(\cdot,p)-d$, whereby $p\in[p_0,0]$
is uniquely determined by the streamline.
Moreover, as a direct consequence of \eqref{PBC}, that the first  equation of \eqref{PB} is  uniformly elliptic.
The equivalence of the problems \eqref{eq:P}, \eqref{eq:psi}, and \eqref{PB} under the assumption \eqref{eq:cond1} (or equivalently \eqref{PBC}) in the $W^2_\infty-$setting
follows easily from previous contributions \cite{Con11, Esch-reg12, BM11} (see also \cite{CS11}).

Our main existence result is the following theorem.
\begin{thm}[Existence result]\label{MT}
Let  $\gamma\in L_\infty((p_0,0))$ be given.
Then,   there exists a positive integer $n$ and    connected curves $\cC_k,$ $k\in\N\setminus\{0\},$ consisting only of  solutions of the problem
 \eqref{PB}-\eqref{PBC} with the property that each solution $h$ belonging to one of the curves satisfies
 \begin{itemize}
 \item[$(i)$] $h\in W^2_\infty(\0)$,
 \item[$(ii)$] $h(\cdot,p)$ is a real-analytic map for all $p\in[p_0,0].$ 
 \end{itemize}
 Each curve $\cC_k$  contains a laminar flow (all the streamlines being parallel to the flat bed)
 and all the other points on the curve correspond to solutions that have minimal period $2\pi/(kn) $, only one crest and trough per period, and are symmetric with respect to the crest line.  
\end{thm}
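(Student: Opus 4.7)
The proof will be an assembly of the three main ingredients developed earlier in the paper: the local bifurcation theorem~\ref{T:LB}, which produces connected curves of weak solutions of the nonlocal reformulation of Sections~\ref{Sec:3}--\ref{Sec:4}; together with the regularity statements Theorem~\ref{MT2} and Corollary~\ref{C:1}, which promote these weak solutions to $W^2_\infty(\0)$ strong solutions of~\eqref{PB}--\eqref{PBC} with real-analytic streamlines.

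Concretely, I would work in spaces of $2\pi/(kn)$-periodic functions in $q$ that are even in $q$. Evenness is preserved by the nonlocal operator, which commutes with the reflection $q\mapsto -q$, so restricting to even functions is admissible and the resulting solutions are automatically symmetric with respect to the crest line $\{q=0\}$. The positive integer $n$ is selected as the smallest wave number for which the linearization of the nonlocal operator about the laminar flow (viewed as a function of the bifurcation parameter) admits a one-dimensional kernel satisfying the Crandall--Rabinowitz transversality condition; this is precisely where Theorem~\ref{T:LB} is applied. For each $k\geq 1$ this kernel is spanned by the single Fourier mode $\cos(knq)$, so the emanating curve $\cC_k$ consists, near the bifurcation point, of perturbations of the laminar flow in that one direction; this produces minimal period $2\pi/(kn)$ and exactly one crest and one trough per period.

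For each solution $h$ on $\cC_k$, the equivalence between the weak formulation of the nonlocal problem and the system~\eqref{PB}--\eqref{PBC} established in Section~\ref{Sec:3}, combined with Theorem~\ref{MT2}, yields $h\in W^2_\infty(\0)$, giving item~$(i)$. Corollary~\ref{C:1} then provides the real-analyticity of $h(\cdot,p)$ for every $p\in[p_0,0]$, giving item~$(ii)$. The ellipticity condition~\eqref{PBC} holds at the laminar flow, where $h_p$ is a positive constant, and persists in a neighborhood by continuity, so the equivalence of formulations is valid along~$\cC_k$ near the bifurcation point.

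The principal challenge in a theorem of this type is not the present assembly step but the preceding bifurcation and regularity analysis; here the main care is to ensure that the functional framework used in Theorem~\ref{T:LB} embeds cleanly into the one required by Theorem~\ref{MT2}, so that weak solutions produced by bifurcation fall within the scope of the regularity result, and to verify that the positivity of $h_p$ is maintained along the branch so that \eqref{PB}--\eqref{PBC} remains equivalent to the Euler system~\eqref{eq:P}. Once the dispersion relation identifying the fundamental wave number~$n$ has been solved, the countable family $\{kn:k\geq 1\}$ generates the branches $\cC_k$ claimed in the theorem.
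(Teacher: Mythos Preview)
Your assembly is essentially the paper's own proof, which simply combines Lemma~\ref{L:1}, Lemma~\ref{L:Re}, Theorem~\ref{T:LB}, and Theorem~\ref{MT2}. One small imprecision: Theorem~\ref{MT2} together with the Section~\ref{Sec:3} equivalence (Lemma~\ref{L:1}) does not by itself yield $h\in W^2_\infty(\Omega)$; Theorem~\ref{MT2} only gives $\partial_q^m h\in C^{1+\alpha}(\overline\Omega)$, and the passage to $W^2_\infty$ requires Lemma~\ref{L:Re} (in Section~\ref{Sec:4}), which uses $h_q\in C^{1+\alpha}(\overline\Omega)$ to extract $h_p\in C^{1-}(\overline\Omega)$ from the divergence-form equation. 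Also, for the laminar flow $H_p=(\lambda-2\Gamma(p))^{-1/2}$ is positive but not constant in $p$.
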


\begin{rem}\label{R:0}
The integer $n$ in Theorem \ref{MT} may be chosen to be $n=1$ provided that the condition \eqref{d2} is satisfied.
\end{rem}

We emphasize that the regularity property $(ii)$ of the solutions found  in Theorem \ref{MT} guarantees that the wave surface and all the streamlines of the flows are real-analytic graphs.
Additional regularity properties of the solutions found in Theorem \ref{MT} are derived in Section \ref{Sec:5}, cf. Theorem \ref{MT2}.

\section{A fourth equivalent  formulation for  the   water wave   problem}\label{Sec:3}
The main difficulty in proving the existe\begin{equation}
 \Gamma(p):=\int_0^p\gamma(s)\, ds\qquad\text{for $p_0\leq p\leq0$,}
\end{equation}nce of solutions of problem \eqref{PB} is due to the fact that we have to deal with a quasilinear elliptic equation  in a $W^2_\infty-$setting.
 A further impediment is raised by the nonlinear boundary condition on $p=0$ which contains second order derivatives of the unknown.
Therefore, we cannot attack  \eqref{PB} directly.
Instead, we consider a weak formulation of \eqref{PB} and establish first the existence of weak solutions to \eqref{PB} that satisfy \eqref{PBC}.
Later on, we improve the regularity of these weak  solutions and show finally that they are  the strong solutions found in Theorem \ref{MT}. 
To this end, we introduce the anti-derivative   $\Gamma:[p_0,0]\to\R$ of $\gamma$ by the relation
\begin{equation}
 \Gamma(p):=\int_0^p\gamma(s)\, ds\qquad\text{for $p_0\leq p\leq0$,}
\end{equation}
and observe that the first equation of \eqref{PB} can be recast in the equivalent form
\[\left(\frac{h_q}{h_p}\right)_q-\left(\Gamma+\frac{1+h_q^2}{2h_p^2}\right)_p=0\qquad\text{in $\0$.}\]
This relation motivates  us  to introduce the following notion of weak solution of \eqref{PB}.
\begin{defn}\label{D:1} A function $h\in C^{1 }(\ov\0)$ is called a {\em weak solution} of \eqref{PB}  if
\begin{itemize}
 \item[$(i)$] $h(\cdot,0)\in C^{2 }(\s)$; 
\item[$(ii)$] $h$ satisfies both boundary conditions of \eqref{PB};
\item[$(iii)$] $h$ satisfies the following integral equation
\begin{equation}\label{PB1}
 \int_\0\frac{h_q}{h_p}\phi_q-\left(\Gamma+\frac{1+h_q^2}{2h_p^2}\right)\phi_p\,  d(q,p)=0\qquad\text{for all $\phi\in C^1_0(\0)$.}
\end{equation}
\end{itemize}
\end{defn}
We have denoted by $ C^1_0(\0)$ the space containing continuously differentiable  functions   with compact support in $\0.$
It is easy to see that any classical solution of the problem \eqref{PB}-\eqref{PBC}, cf. \cite{W06b}, is also a weak solution of this problem.
The disadvantage of this definition is that one needs to require more regularity from $h$ on the boundary $p=0,$ fact which makes it difficult to consider a
 suitable functional analytic setting for this concept of weak solutions. 
Fortunately, we can recast the second order  boundary condition of \eqref{PB}, which is obtained from to Bernoulli's principle,   as a nonlinear and nonlocal equation.
This operation  has the benefit of transforming the boundary condition from a  differential equation of order two--we lose two derivatives due to the 
curvature term--into a nonlocal equation of order zero.

In the following $\tr_0$ will denote the trace operator with respect to boundary  $p=0,$ that is $\tr_0v=v(\cdot,0)$ for all $v\in C(\ov\0).$
Let $\alpha\in(0,1)$ be fixed for the remainder of the paper.
\begin{lemma}\label{L:1} Let $(1-\p_q^2)^{-1}\in \kL(C^{\alpha}(\s), C^{2+\alpha}(\s))$ denote the inverse of the linear operator 
\[1-\p_q^2:C^{2+\alpha}(\s)\to C^{\alpha}(\s).\]
\begin{itemize}
 \item[$(i)$] Assume that $h \in C^{1+\alpha }(\ov\0)$  is a weak solution of    \eqref{PB} that satisfies  additionally the condition  \eqref{PBC}.
Then, $h$  also satisfies the following equation
\begin{equation}\label{NBC}
h+(1-\p_q^2)^{-1}\tr_0\left( \frac{\left(1+h_q^2+(2gh-Q)h_p^2\right)(1+h_q^2)^{3/2}}{2\sigma h_p^2}-h\right)=0 \qquad\text{on $p=0$.}
\end{equation}
\item[$(ii)$] Assume that $h\in C^{1+\alpha}(\ov\0)$   verifies   the condition  \eqref{PBC} and that $h$ is a weak solution of the problem
\begin{equation}\label{PB'}
\left\{
\begin{array}{rllll}
(1+h_q^2)h_{pp}-2h_ph_qh_{pq}+h_p^2h_{qq}-\gamma(p)h_p^3&=&0&\text{in $\0$},\\[1ex]
\displaystyle h+(1-\p_q^2)^{-1}\tr_0\left( \frac{\left(1+h_q^2+(2gh-Q)h_p^2\right)(1+h_q^2)^{3/2}}{2\sigma h_p^2}-h\right)&=&0&\text{on $p=0$},\\[1ex]
h&=&0&\text{on $ p=p_0,$}
\end{array}
\right.
\end{equation}
that is $h$ satisfies the last two equations of \eqref{PB'} pointwise and the first equation in the weak sense defined in Definition \ref{D:1} $(iii)$.
Then, $h$ is a weak solution of \eqref{PB}. 
\end{itemize}
\end{lemma}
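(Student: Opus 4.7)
The plan is to exploit the fact that, under the nondegeneracy condition \eqref{PBC}, the curvature boundary condition on $p=0$ can be solved algebraically for the top-order term $h_{qq}$. Setting
\[
F := \frac{\left(1+h_q^2+(2gh-Q)h_p^2\right)(1+h_q^2)^{3/2}}{2\sigma h_p^2},
\]
the classical Bernoulli condition on $p=0$ is equivalent to $h_{qq}=F$, hence to the identity $(1-\p_q^2)\tr_0 h = \tr_0(h-F)$. Since the spectrum of $-\p_q^2$ on $\s$ is $\{k^2:k\in\Z\}$, the shift $1-\p_q^2\colon C^{2+\alpha}(\s)\to C^{\alpha}(\s)$ is a topological isomorphism, so inverting it converts this identity into precisely \eqref{NBC}. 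The two parts of the lemma then amount to reading this equivalence in the two opposite directions, while carefully tracking the regularity of the boundary trace.

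For part $(i)$, Definition \ref{D:1} already delivers $\tr_0 h\in C^{2}(\s)$ together with the pointwise validity of the Bernoulli equation. Combined with $h\in C^{1+\alpha}(\ov\0)$ and the positive lower bound on $h_p$ from \eqref{PBC}, the expression $\tr_0(h-F)$ lies in $C^{\alpha}(\s)$; applying $(1-\p_q^2)^{-1}$ to the displayed identity yields \eqref{NBC} directly, and the bookkeeping here is routine.

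For part $(ii)$ I would traverse the same equivalence in reverse, and this is where the only genuine subtlety arises. A priori $h\in C^{1+\alpha}(\ov\0)$ gives only $\tr_0 h\in C^{1+\alpha}(\s)$, so it is not clear that a pointwise Bernoulli condition even makes sense. However, rewriting the nonlocal boundary equation of \eqref{PB'} as $\tr_0 h = -(1-\p_q^2)^{-1}\tr_0(F-h)$ and invoking the mapping property of $(1-\p_q^2)^{-1}$ forces $\tr_0 h\in C^{2+\alpha}(\s)$, so Definition \ref{D:1}$(i)$ is gained for free. Applying $1-\p_q^2$ to the same identity recovers $h_{qq}(\cdot,0)=\tr_0 F$, which is exactly the pointwise Bernoulli condition; Definition \ref{D:1}$(ii)$ then follows, and Definition \ref{D:1}$(iii)$ together with the bottom boundary condition are common to \eqref{PB} and \eqref{PB'}.

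The main (and essentially only) obstacle is this regularity bookkeeping in part $(ii)$: one has to recognise that the two-derivative gain produced by inverting $1-\p_q^2$ on the circle is exactly what upgrades the trace from $C^{1+\alpha}$ to $C^{2+\alpha}$, legitimising the a posteriori pointwise Bernoulli condition. No additional PDE estimate is needed for the lemma itself; the real benefit of this reformulation will surface later, when \eqref{PB'} is recast as a nonlocal problem of zeroth order on the boundary and fed into the bifurcation machinery of the following sections.
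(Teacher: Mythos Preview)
Your proposal is correct and follows essentially the same approach as the paper: for $(ii)$ you rewrite the nonlocal boundary condition as $\tr_0 h=-(1-\p_q^2)^{-1}\tr_0(F-h)$, use the $C^\alpha\to C^{2+\alpha}$ mapping property to upgrade the trace regularity, and then apply $1-\p_q^2$ to recover the pointwise Bernoulli condition, which is exactly what the paper does. Your treatment of $(i)$ is slightly more explicit than the paper's (which dismisses it in a single sentence), but the underlying idea---that $h_{qq}=F$ is equivalent to $(1-\p_q^2)\tr_0 h=\tr_0(h-F)$ and hence to \eqref{NBC} after inverting---is the same.
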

\begin{proof}
It is easy to see that $(ii)$ follows from $(i)$.
On the other hand, if we assume that $(ii)$ is satisfied, we only need to show that $\tr_0 h\in C^{2+\alpha}(\s)$ and that $h$ satisfies the second boundary condition of \eqref{PB}.
Noticing that the second equation of \eqref{PB'} implies
\[
\tr _0h =-(1-\p_q^2)^{-1}\tr_0\left(\frac{\left(1+h_q^2+(2gh-Q)h_p^2\right)(1+h_q^2)^{3/2}}{2\sigma h_p^2}-h\right)
\] 
 we deduce that $\tr_0h\in C^{2+\alpha}(\s),$ and,  applying the operator $(1-\p_q^2)$ to the latter equation, we obtain the desired conclusion. 
\end{proof}

The advantage of the formulation \eqref{PB'} of the water wave problem is that all its equations are well-defined for functions  $h\in C^{1+\alpha}(\ov\0)$.
This allows  us to introduce a functional analytic setting and to recast \eqref{PB'} as a bifurcation problem.
Then, using the theorem on local bifurcation from simple eigenvalues due to Crandall and Rabinowitz \cite{CR71} we determine weak solutions of \eqref{PB'} and \eqref{PBC} which are located on real-analytic curves.

\section{Local bifurcation of weak solutions}\label{Sec:4}

We now introduce  a parameter $\lambda$ into the problem \eqref{PB'} which is   used to describe  the trivial solutions of \eqref{PB'}.
These are laminar flows, with a flat surface and parallel streamlines, and are denoted by $H.$
Indeed, if $H\in C^{1+\alpha}(\ov\0)$ is a weak solution of \eqref{PB'} and \eqref{PBC} which is independent of  $q$,  then $H(p_0)=0$,  
\begin{equation*}
H(0)+(1-\p_q^2)^{-1}\left( \frac{ 1+(2gH(0)-Q)H_p^2(0) }{2\sigma H_p^2(0)}-H(0)\right)=0, 
\end{equation*}
and
\begin{equation*}
 \int_\0\left(\Gamma+\frac{1}{2H_p^2}\right)\phi_p\,  d(q,p)=0\qquad\text{for all $\phi\in C^1_0(\0)$.}
\end{equation*}
This last relation ensures that $\Gamma+1/(2H_p^2)$ is a constant function. 
Taking into account that $(1-\p_q^2)^{-1}c=c$ for all $c\in\R,$
we find a constant $\lambda>2\max_{[p_0,0]}\Gamma $ such that we have
\begin{equation}\label{E:LFS}
\begin{aligned}
H(p)&:=H(p;\lambda):=\int_{p_0}^p\frac{1}{\sqrt{\lambda-2\Gamma(s)}}\, ds,\qquad\text{ $p\in[p_0,0]$}.
\end{aligned}
\end{equation}
Requiring that   $H$   solves also the boundary condition on $p=0,$  we determine the head $Q$ as a function of the parameter $\lambda$ 
\begin{equation}\label{Q}
Q:=Q(\lambda):=\lambda+2g\int_{p_0}^0\frac{1}{\sqrt{\lambda-2\Gamma(p)}}\, dp.
\end{equation}
Let us observe that in fact $H\in C^{2-}([p_0,0])$, and that  the constant $\lambda$ is related to the horizontal speed at the top of the laminar flow by the relation
\[\sqrt{\lambda}=\frac{1}{H_p(0)}=(c-{ u})\big|_{y=0}.\]

We now present an abstract  functional analytic setting which allows us to recast  the problem \eqref{PB'} as an operator equation.
We choose  therefore an integer  $n\in\N$ with $n\geq1$ (which will be fixed later on) and define the Banach spaces:
\begin{align*}
 X&:=\left\{h\in C^{1+\alpha}_{2\pi/n}(\ov\0)\,:\, \text{$h$ is even in $q$ and $h\big|_{p=p_0}=0$}\right\},\\
 Y_1&:=\{f\in\mathcal{D}'(\0)\,:\, \text{$f=\p_q\phi_1+\p_p\phi_2$ for  $\phi_1,\phi_2\in C^\alpha_{2\pi/n}(\ov\0)$ with $\phi_1$ odd and $\phi_2$ even in $q$}\},\\
 Y_2&:=\{\varphi\in C^{1+\alpha}_{2\pi/n}(\s)\,:\, \text{$\varphi$ is even}\},
\end{align*}
whereby we have identified, when defining $Y_2$, the unit circle $\s$  with the line $p=0.$ 
The subscript ${2\pi/n}$ means that we consider functions which are $2\pi/n$--periodic only.
The space  $Y_1$ is a Banach space with the norm
\[
\|f\|_{Y_1}:=\inf\{\|\phi_1\|_\alpha+\|\phi_2\|_\alpha\,:\, f=\p_q\phi_1+\p_p\phi_2\}.
\]
Moreover, we introduce the operator $\cF:=(\cF_1,\cF_2):(2\max_{[p_0,0]}\Gamma,\infty)\times X\to Y:=Y_1\times Y_2$ by the relations
\begin{align*}
 \cF_1(\lambda,h):=&\left(\frac{h_q}{H_p+h_p}\right)_q-\left(\Gamma+\frac{1+h_q^2}{2(H_p+h_p)^2}\right)_p,\\
 \cF_2(\lambda,h):=&\tr_0h+(1-\p_q^2)^{-1}\tr_0\left(\frac{\left(1+h_q^2+(2g(H+h)-Q)(H_p+h_p)^2\right)(1+h_q^2)^{3/2}}{2\sigma(H_p+h_p)^2}-h\right)
\end{align*}
for $(\lambda,h)\in (2\max_{[p_0,0]}\Gamma,\infty)\times X,$
whereby $H=H(\cdot;\lambda)$ and $Q=Q(\lambda)$ are  given by \eqref{E:LFS} and \eqref{Q}, respectively.
Let us observe that the function $\cF$ is well-defined and   it depends real-analytically on its arguments, that is 
\begin{align}\label{BP0}
 \cF\in C^\omega((2\max_{[p_0,0]}\Gamma,\infty)\times X, Y).
\end{align}
Whence, the problem \eqref{PB'} is equivalent to the following abstract  equation
\begin{align}\label{BP}
 \cF(\lambda,h)=0\qquad\text{in $Y$,}
\end{align}
the laminar flow solutions of \eqref{PB'} corresponding to the trivial solutions of $\cF$
\begin{align}\label{BP1}
 \cF(\lambda,0)=0\qquad\text{for all $\lambda\in(2\max_{[p_0,0]}\Gamma,\infty).$}
\end{align}
We emphasize that  if $(\lambda,h)$ is a solution of \eqref{BP}, then the function $h+H(\cdot;\lambda)$  is a weak solution of \eqref{PB'}, when $Q=Q(\lambda)$, 
and it also satisfies the condition \eqref{PBC} if $h$ is sufficiently small.

In order to prove the existence of branches of solutions of \eqref{BP}  bifurcating from the laminar flows $h=0,$ we need to determine particular $\lambda$ for which  $\p_{h} \cF (\lambda,0)\in\kL(X,Y)$ 
is a Fredholm operator of index zero with a one-dimensional kernel. 
We first prove   that  $\p_{h}\cF(\lambda,0)$ is a Fredholm operator of index zero for every value of $\lambda\in(2\max_{[p_0,0]}\Gamma,\infty).$ 
To this end, given $\lambda\in(2\max_{[p_0,0]}\Gamma,\infty),$  we note that the Fr\' echet derivative $\p_{h} \cF (\lambda,0)$ is the linear operator $(L,T)\in\kL(X,Y)$ given by
\begin{equation}\label{L1}
\begin{aligned}
 Lw:=& \left(\frac{w_q}{H_p}\right)_q+\left(\frac{w_p}{H_p^3}\right)_p,\\
 Tw:=&\tr_0 w+(1-\p_q^2)^{-1} \tr_0 \left(\frac{gw-\lambda^{3/2}w_p}{\sigma}-w\right)
\end{aligned}\qquad\quad\text{for $w\in X.$}
\end{equation}

\begin{lemma}\label{L:2}
Given $\lambda\in(2\max_{[p_0,0]}\Gamma,\infty),$    the Fr\' echet  derivative  $\p_{h}\cF(\lambda,0)\in\kL(X,Y)$ is a Fredholm operator of index zero.
\end{lemma}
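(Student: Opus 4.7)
The strategy I would follow is to write $\p_h\cF(\lambda,0)=(L,T)$ as a compact perturbation of an isomorphism, so that the conclusion drops out of the classical stability of the Fredholm index under compact perturbations. Explicitly, I would decompose
\[
(L,T)=\mathcal{A}+\mathcal{K},\qquad \mathcal{A}w:=(Lw,\tr_0 w),\qquad \mathcal{K}w:=\Bigl(0,\,(1-\p_q^2)^{-1}\tr_0\bigl[\tfrac{gw-\lambda^{3/2}w_p}{\sigma}-w\bigr]\Bigr),
\]
so that the second component of $\mathcal{K}w$ reproduces exactly the nonlocal part of $Tw-\tr_0 w$ in \eqref{L1}.

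Compactness of $\mathcal{K}:X\to Y_1\times Y_2$ is immediate from the smoothing of $(1-\p_q^2)^{-1}$. For $w\in X\subset C^{1+\alpha}(\ov\0)$ one has $\tr_0 w\in C^{1+\alpha}(\s)$ and $\tr_0 w_p\in C^\alpha(\s)$, so the bracket lies in $C^\alpha(\s)$; applying the isomorphism $(1-\p_q^2)^{-1}\in\kL(C^\alpha(\s),C^{2+\alpha}(\s))$ lands $\mathcal{K}w$ in $\{0\}\times C^{2+\alpha}(\s)$, and the embedding $C^{2+\alpha}(\s)\hookrightarrow C^{1+\alpha}(\s)=Y_2$ is compact.

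The core of the argument is to show that $\mathcal{A}:X\to Y_1\times Y_2$ is a topological isomorphism, that is, to solve uniquely the Dirichlet problem $Lw=f$ in $\0$, $w|_{p=p_0}=0$, $w|_{p=0}=\psi$, for the divergence-form uniformly elliptic operator $Lw=\p_q(w_q/H_p)+\p_p(w_p/H_p^3)$. Its coefficients $1/H_p$ and $1/H_p^3$ are Lipschitz on $[p_0,0]$ (hence in $C^{\alpha}$) and bounded away from zero, since $\Gamma$ is Lipschitz and $H_p\ge 1/\sqrt{\lambda}>0$. For injectivity, any $w\in X$ with $\mathcal{A}w=0$ lies in $H^1_0(\0)$ (both Dirichlet traces vanish), and testing the weak formulation of $Lw=0$ with $w$ itself yields $\int_\0(w_q^2/H_p+w_p^2/H_p^3)\,d(q,p)=0$, so $w\equiv 0$ by coercivity. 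For surjectivity, I would lift any $\psi\in Y_2$ to some $\tilde w\in X$ with $\tr_0\tilde w=\psi$, reduce to homogeneous Dirichlet data via $v:=w-\tilde w$, solve for $v\in H^1_0(\0)$ by Lax--Milgram applied to the coercive bilinear form associated with $L$, and upgrade the resulting $H^1$-solution to $C^{1+\alpha}(\ov\0)$ using the divergence-form Schauder theory of \cite{GT01} (the source $f\in Y_1$ is precisely a distributional divergence of $C^{\alpha}$ vector fields, which is the natural framework for those estimates).

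The main obstacle is this last step, i.e.\ promoting the weak $H^1$-solution to $C^{1+\alpha}(\ov\0)$ up to both flat boundary components, with inhomogeneous Dirichlet data and a source that is only a distributional divergence of $C^\alpha$ fields; this is handled by a localization and reflection argument across the flat boundaries $p\in\{p_0,0\}$, which reduces the boundary estimate to an interior Schauder estimate for an even extension. Once $\mathcal{A}$ is established to be an isomorphism and $\mathcal{K}$ to be compact, the operator $\p_h\cF(\lambda,0)=\mathcal{A}+\mathcal{K}$ is a compact perturbation of an isomorphism and is therefore Fredholm of index zero, yielding Lemma \ref{L:2}.
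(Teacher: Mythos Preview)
Your approach is essentially identical to the paper's: the same decomposition $(L,T)=(L,\tr_0)+\mathcal{K}$, the same compactness argument for $\mathcal{K}$ via the smoothing of $(1-\p_q^2)^{-1}$, and the same reduction to the isomorphism property of the Dirichlet operator $(L,\tr_0)$. The only difference is that where you sketch the Lax--Milgram/Schauder argument by hand, the paper simply invokes \cite[Theorem~8.34]{GT01}, which packages exactly that existence, uniqueness, and $C^{1+\alpha}$-regularity statement for divergence-form equations with $C^\alpha$ coefficients and Dirichlet data; as a minor aside, your lower bound $H_p\ge 1/\sqrt{\lambda}$ is not quite right (it need not hold if $\Gamma$ takes negative values), but all that is actually needed is that $H_p$ is bounded above and below by positive constants, which follows from $\lambda>2\max_{[p_0,0]}\Gamma$ and the continuity of $\Gamma$.
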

\begin{proof}
Given $w\in X,$ we note that
\[
(L,T)w=(L, \tr_0 )w+\left(0,(1-\p_q^2)^{-1} \tr_0\left(\frac{gw-\lambda^{3/2}w_p}{\sigma}-w\right)\right).
\]
Recalling that $(1-\p_q^2)^{-1}\in \kL(C^{\alpha}(\s), C^{2+\alpha}(\s))$, the operator
\[X\ni w\mapsto \left(0,(1-\p_q^2)^{-1} \tr_0\left(\frac{gw-\lambda^{3/2}w_p}{\sigma}-w\right)\right)\in Y\]
is  compact, so that our conclusion is immediate if  $(L,\tr_0):X\to Y$ is an isomorphism.
However, the latter property follows readily from the existence and uniqueness result  stated in \cite[Theorem 8.34]{GT01}.
\end{proof}

\paragraph{\bf The kernel of the Fr\'echet derivative}
We now identify certain $\lambda$ for which the Fr\' echet derivative $\p_{h}\cF(\lambda,0)=(L,T)$ has a one-dimensional kernel.
To this end, let $w\in X$  be a vector in the kernel of $(L,T)$ and define for each $k\in\N$ the Fourier coefficients
\[
w_k(p):=\la w(\cdot, p)|\cos(kn\cdot)\ra_{L_2}:=\int_0^{2\pi} w(q,p)\cos(knq)\, dq\qquad\text{for $p\in[p_0,0]$.}
\]
Clearly, we have   $w_k\in C^{1+\alpha}([p_0,0])$ for all $k\in\N.$
Given $\psi\in C^1_0((p_0,0)),$ we define the function  $\phi(q,p):=\psi(p)\cos(knq)$ for $(q,p)\in\0,$ and observe that
  $\phi\in C^1_0(\0).$ 
  Whence, in virtue of $ L w=0$,   integration by parts gives
\[
\int_{p_0}^0\left(\frac{w_k'}{H_p^3}\psi'+\frac{(kn)^2w_k}{H_p}\psi\right) dp=0.
\]
This relation being true for all $\psi\in C^1_0((p_0,0))$ and since   $H_p\in C^{1-}([p_0,0])=W^1_\infty((p_0,0))$, we conclude that $w_k\in H^2((p_0,0))$
is a strong solution of the equation
\[
\left(\frac{w_k'}{H_p^3}\right)'-\frac{(kn)^2w_k}{H_p}=0\qquad\text{in $L_2((p_0,0)).$}
\]
Moreover $w\in X $ implies that $w_k(p_0)=0$, while multiplying the relation $Tw=0$ by $\cos(knq)$ and making use of the symmetry of the operator $(1-\p_q^2)^{-1},$
that is
\begin{align}\label{BBB}
\la f|(1-\p_q^2)^{-1}g\ra_{L_2}=\la  (1-\p_q^2)^{-1}f|g\ra_{L_2}\qquad\text{for all $f,g\in C^\alpha(\s)$},
\end{align}
we determine a third relation
\[(g+\sigma (kn)^2)w_k(0)=\lambda^{3/2}w_k'(0).\]
Summarizing,  the Fourier coefficient $w_k\in H^2((p_0,0))$ solves the problem
\begin{equation}\label{E:m}
\left\{
\begin{array}{rlll}
  (a^3 w')'-\mu aw&=&0 &\text{in $L_2((p_0,0))$,}\\
  (g+\sigma\mu)w(0)&=&\lambda^{3/2}w'(0),\\
  w(p_0)&=&0,
  \end{array}\right.
\end{equation}
when $\mu=(kn)^2.$
Hereby, we use the shorthand $a:=1/H_p\in C^{1-}([p_0,0]).$
Thus, if we wish that $\p_{h}\cF(\lambda,0) $ has a one-dimensional kernel,   we need to impose conditions on $\lambda$
which guarantee that the system \eqref{E:m} has non-trivial solutions--which form a one-dimensional subspace of $H^2((p_0,0))$--for only one constant $\mu\in\{(kn)^2\,:\, k\in\N\}.$
This motivates us to study,  for each  $(\lambda,\mu)\in(2\max_{[p_0,0]}\Gamma,\infty)\times[0,\infty),$ the Sturm-Liouville operator 
$R_{\lambda,\mu}:H \to L_2((p_0,0))\times  \R , $ whereby $H:=\{w\in H^2((p_0,0))\,:\, w(p_0)=0\}$  and  
\begin{equation*}
 R_{\lambda,\mu}w:=
 \begin{pmatrix}
  (a^3 w')'-\mu aw\\
  (g+\sigma\mu)w(0)-\lambda^{3/2}w'(0)
 \end{pmatrix}\qquad\text{for $w\in H.$}
\end{equation*}
Therefore, given  $(\lambda,\mu)\in(2\max_{[p_0,0]}\Gamma,\infty)\times [0,\infty)$, we define  the functions $v_i\in C^{2-}([p_0,0])$, with
 $v_i:=v_i(\cdot;\lambda,\mu)$,  as being the solutions of  the initial value problems 
\begin{equation}\label{ERU}
\left\{\begin{array}{lll}
  (a^3 v_1')'-\mu av_1=0\qquad \text{in $L_2((p_0,0))$},\\[1ex]
  v_1(p_0)=0,\quad v_1'(p_0)=1,
 \end{array}
 \right.\hspace{1cm}
 \left\{\begin{array}{lll}
  (a^3v_2')'-\mu av_2=0\qquad \text{in $L_2((p_0,0))$},\\[1ex]
  v_2(0)=\lambda^{3/2},\quad v_2'(0)=g+\sigma\mu.
 \end{array}
 \right.
\end{equation}
These problems can be seen as system of first order linear ordinary differential equations, and therefore the existence and uniqueness  of  $v_i$ follows from the classical theory, cf. \cite{A83}.  

\begin{prop}\label{P:2}
 For every  $(\lambda,\mu)\in(2\max_{[p_0,0]}\Gamma,\infty)\times[0,\infty)$, the operator $R_{\lambda,\mu}$ is a Fredholm operator of index zero and its kernel is at most one-dimensional.
 Furthermore, the kernel of the operator $R_{\lambda,\mu}$ is non-trivial exactly when  the functions  $v_i$, $i=1,2,$ given by \eqref{ERU}, are linearly dependent.
 In this case we have $\ke R_{\lambda,\mu}=\spa\{v_1\}.$
\end{prop}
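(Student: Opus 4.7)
The plan is to write $R_{\lambda,\mu}$ as the sum of an isomorphism capturing the highest-order part and a compact lower-order perturbation. Setting
\[
Aw := \bigl((a^3 w')',\, -\lambda^{3/2} w'(0)\bigr), \qquad Kw := \bigl(-\mu a w,\, (g+\sigma\mu)w(0)\bigr),
\]
one has $R_{\lambda,\mu} = A + K$ on $H$. First I would verify that $A\colon H\to L_2((p_0,0))\times\R$ is an isomorphism by solving $Aw = (f,c)$ explicitly: integrating $(a^3 w')' = f$ yields $a^3 w' = C_0 + \int_{p_0}^{\cdot} f(s)\,ds$, the constant $C_0$ is fixed by $w'(0) = -c\lambda^{-3/2}$, and $w$ is recovered by a further integration using $w(p_0) = 0$. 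This is well posed because $a = 1/H_p \in W^1_\infty((p_0,0))$ is bounded away from zero (the laminar flow from \eqref{E:LFS} has $H_p > 0$ on $[p_0,0]$ since $\lambda > 2\max_{[p_0,0]}\Gamma$). Compactness of $K$ follows from the compact embeddings $H \hookrightarrow L_2((p_0,0))$ (for the first component $-\mu aw$) and $H \hookrightarrow C([p_0,0])$ (for the trace $w \mapsto w(0)$ in the second component). Hence $R_{\lambda,\mu}$ is a compact perturbation of an isomorphism, and therefore Fredholm of index zero.

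For the kernel, any $w \in \ke R_{\lambda,\mu}$ solves the linear second-order ODE $(a^3 w')' - \mu a w = 0$ together with $w(p_0) = 0$. Since the solutions of this ODE vanishing at $p_0$ form at most a one-dimensional subspace by uniqueness for the linear initial value problem at $p_0$, and $v_1$ is a non-trivial such solution (thanks to $v_1'(p_0) = 1 \neq 0$), one obtains $\ke R_{\lambda,\mu} \subseteq \spa\{v_1\}$. Consequently, $\ke R_{\lambda,\mu}$ is non-trivial iff $v_1$ itself satisfies the remaining boundary condition at $p=0$, namely
\[
(g+\sigma\mu)\,v_1(0) = \lambda^{3/2}\, v_1'(0),
\]
in which case $\ke R_{\lambda,\mu} = \spa\{v_1\}$.

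Finally, I would reinterpret this boundary condition via the Wronskian $W := v_1 v_2' - v_1' v_2$ of the ODE. A short computation using $(a^3 v_i')' = \mu a v_i$ for $i = 1, 2$ shows $(a^3 W)' = 0$, hence $W$ vanishes identically iff $W(0) = 0$; moreover, since $v_2 \not\equiv 0$ (as $v_2(0) = \lambda^{3/2} \neq 0$), the identity $W \equiv 0$ is equivalent to the linear dependence of $v_1$ and $v_2$. Substituting the initial data of $v_2$ at $p = 0$ from \eqref{ERU} gives
\[
W(0) = (g+\sigma\mu)\,v_1(0) - \lambda^{3/2}\, v_1'(0),
\]
which is precisely the left-hand side of the boundary condition above. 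This yields the characterization asserted in the proposition.

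The main obstacle I expect is the explicit inversion of $A$: it rests on the uniform positivity of $a$, equivalently of $H_p$, which requires $\lambda > 2\max_{[p_0,0]}\Gamma$ together with the $W^1_\infty$-regularity of $H$ coming from \eqref{E:LFS}. Once this is secured, the Fredholm conclusion is a standard application of the compact-perturbation theorem, and the kernel analysis reduces to a classical Wronskian argument for linear ODEs.
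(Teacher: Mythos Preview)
Your argument is correct. The approach differs from the paper's in two respects. First, you split $R_{\lambda,\mu}=A+K$ with the zero-order term $-\mu a w$ placed in the compact part, whereas the paper keeps it in the principal part, writing $R_{\lambda,\mu}=R_I+R_c$ with $R_I w=\bigl((a^3w')'-\mu aw,\ -\lambda^{3/2}w'(0)\bigr)$ and $R_c w=\bigl(0,\ (g+\sigma\mu)w(0)\bigr)$. Second, to show the principal part is an isomorphism you integrate the ODE explicitly, while the paper uses a variational formulation on $H_*=\{w\in H^1:\ w(p_0)=0\}$ and invokes the Lax--Milgram theorem for the coercive bilinear form $\int_{p_0}^0(a^3 w'\varphi'+\mu a w\varphi)\,dp$. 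Your route is more elementary and makes the role of the positivity of $a$ entirely transparent; the paper's variational argument is more robust in that it would adapt immediately to coefficients for which the equation cannot be integrated in closed form. The kernel analysis via the Wronskian identity $a^3(v_1v_2'-v_2v_1')=\mathrm{const.}$ is essentially the same in both proofs.
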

\begin{proof}
 Observe first that the operator $R_{\lambda,\mu}$ can be writen as a sum $R_{\lambda,\mu}=R_I+R_c$, with
  \[
 R_Iw:=
 \begin{pmatrix}
 (a^3 w')'-\mu aw\\
  -\lambda^{3/2}w'(0)
 \end{pmatrix} \qquad \text{and}\qquad
R_cw:=
 \begin{pmatrix}
 0\\
  (g+\sigma\mu) w(0)
 \end{pmatrix} 
 \]
 for all $w\in H,$  
 $R_c$ being a compact operator.
 Furthermore, if the equation $R_Iw=(f,A), $ with $ (f,A)\in L_2((p_0,0))\times  \R$, has a solution $w\in H,$ then
  \begin{equation}\label{VF}
  \int_{p_0}^0\left(a^3w'\varphi'+\mu aw\varphi\right)dp=-A\varphi(0)-\int_{p_0}^0 f\varphi\, dp
 \end{equation}
 for all $\varphi\in H_*:=\{w\in H^1((p_0,0))\,:\, w(p_0)=0\}$.
 Noticing that the right-hand side of \eqref{VF} defines a linear functional in $\kL(H_*,\R),$ and that the left-hand side corresponds to a bounded bilinear and coercive functional in $H_*\times H_*,$
the existence and uniqueness of a solution $w\in H_*$ follows from the Lax-Milgram theorem, cf. \cite[Theorem 5.8]{GT01}.
This solution is actually in $H$ and therefore $R_I$ is an isomorphism. 
This proves the Fredholm property of $R_{\lambda,\mu}$.

In order to see that the  kernel of  $R_{\lambda,\mu}$ is at most one-dimensional, we consider two solutions $w_1,w_2\in H^2((p_0,0)) $ of the equation $(a^3 w')'-\mu aw=0$ in $L_2((p_0,0)).$
Multiplying the   equation satisfied by $w_1$ with $ w_2$ and that satisfied by $w_2$ with $w_1,$ we obtain, after subtracting the new identities, that
\begin{equation}\label{BV}a^3(w_1w_2'-w_2w_1')=const. \qquad\text{in $[p_0,0]$}.\end{equation}
Thus, if additionally $w_1, w_2\in H,$ then the constant is zero and, in view of $a>0,$ $w_1$ and $w_2$ are linearly dependent.
Finally, it is not difficult to see that if the functions $v_1$ and $v_2$, given by \eqref{ERU}, are linearly dependent, then they both belong to $\ke R_{\lambda,\mu}.$
On the other hand, if  $0\neq v\in \ke R_{\lambda,\mu},$ using the relation \eqref{BV}, we get that $v$ is colinear with $v_1 $ and $v_2.$ This proves the claim.
 \end{proof}

Thus, we need to determine for which $(\lambda,\mu)$ the Wronskian 
\[
W(p;\lambda,\mu):=\left|
\begin{array}{lll}
 v_1&v_2\\
 v_1'&v_2'
\end{array}
\right|
\]
vanishes on the whole interval $[p_0,0].$ 
Recalling \eqref{BV}, the Wronskian vanishes on $[p_0,0]$ if and only if it vanishes at $p=0.$
Summarizing,  $  R_{\lambda,\mu} $  has a  one-dimensional kernel exactly when $(\lambda,\mu)$ is a solution of the equation $W(0;\lambda,\mu)=0.$
Taking into account that all the equations of \eqref{ERU} depend real-analytically on  the variable $(\lambda,\mu),$ we deduce that
the function   $W(0;\cdot,\cdot):(2\max_{[p_0,0]}\Gamma,\infty)\times  [0,\infty)\to\R$, defined by 
\begin{equation}\label{DEF}
W(0;\lambda,\mu):=\lambda^{3/2}v_1'(0;\lambda,\mu)-(g+\sigma\mu)v_1(0;\lambda,\mu),
\end{equation}
is real-analytic.
Determining the zeros of $W(0;\cdot,\cdot)$ when $\mu=0$ is rather easy. 
Indeed, for  $\mu=0,$ we can determine $v_1$ explicitly
\[
v_1(p;\lambda,0)=\int_{p_0}^p\frac{a^3(p_0)}{a^3(s)}\, ds,\qquad p\in[p_0,0].
\]
Consequently,  $W(0;\lambda,0)=0$ if and only if $\lambda$ solves the equation
\begin{equation}\label{QU}
 \frac{1}{g}=\int_{p_0}^0\frac{1}{a^3(p)}\, dp.
\end{equation}
The right-hand side of \eqref{QU} is a strictly decreasing function of $\lambda$,
\[
\int_{p_0}^0\frac{1}{a^3(p)}\, dp \, { \underset{\lambda\to\infty}{\longrightarrow } 0}\qquad\text{and}\qquad \int_{p_0}^0\frac{1}{a^3(p)}\, dp
 \, {\underset{\lambda\to 2\max_{[p_0,0]}\Gamma}{\longrightarrow} \infty}.
\]
Consequently, there exists a unique $\lambda_0\in(2\max_{[p_0,0]}\Gamma,\infty)$ which satisfies \eqref{QU}.
When $\mu>0,$ there are in general no explicit formula for $v_1,$ and the problem of determining the zeros of $W(0;\cdot,\cdot)$ is more intriguing.
However, the arguments used in \cite{CM13xx} can be   adapted  to our context to prove the following statement.

\begin{prop}\label{P:4}
Given $\lambda>\lambda_0,$ there exists a unique solution $\mu(\lambda)\in(0,\infty)$ of the equation $W(0;\lambda,\mu)=0.$
Moreover, we have $W(0;\lambda_0,\cdot)^{-1}\{0\}=\{0,\mu(\lambda_0)\}$   whereby $\mu(\lambda_0)=0$ if
\begin{equation}\label{d2}
   \int_{p_0}^0 a(p)\left(\int_{p_0}^p\frac{1}{a^3(s)}\, ds\right)^2\, dp<\frac{\sigma}{g^2}.
 \end{equation}
The function $\mu:[\lambda_0,\infty)\to[0,\infty)$ is real-analytic in $(\lambda_0,\infty)$, strictly increasing, and
\begin{equation}\label{lim}
 \lim_{\lambda\to\infty}\mu(\lambda)=\infty.
\end{equation}
\end{prop}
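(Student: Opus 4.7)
My plan is to reduce the zero-finding problem to a scalar monotonicity and concavity analysis of a single function $G$ of $\mu$, and then apply the real-analytic implicit function theorem. First, I observe that the initial value problem in \eqref{ERU} forces $v_1(\cdot;\lambda,\mu)>0$ and $v_1'(\cdot;\lambda,\mu)>0$ on $(p_0,0]$ for every $\mu\geq 0$: integrating $(a^3v_1')'=\mu a v_1\geq 0$ from $p_0$, the function $a^3v_1'$ is non-decreasing and strictly positive, so $v_1$ is strictly increasing. Consequently $v_1(0;\lambda,\mu)>0$, and I define the Dirichlet-to-Neumann ratio
\[
R(\mu):=\frac{v_1'(0;\lambda,\mu)}{v_1(0;\lambda,\mu)},
\]
so that $W(0;\lambda,\mu)=v_1(0;\lambda,\mu)\,G(\mu)$ with $G(\mu):=\lambda^{3/2}R(\mu)-g-\sigma\mu$. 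Zeros of $W(0;\lambda,\cdot)$ thus coincide with zeros of $G$.

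The technical heart is strict concavity of $R$. For $\mu_1<\mu_2$, integrating the identity $(a^3 v_1(\mu_1)v_1'(\mu_2)-a^3 v_1'(\mu_1)v_1(\mu_2))'=(\mu_2-\mu_1)a v_1(\mu_1)v_1(\mu_2)$ from $p_0$ and using the initial data yields
\[
a^3(0)\bigl[v_1(0;\mu_1)v_1'(0;\mu_2)-v_1'(0;\mu_1)v_1(0;\mu_2)\bigr]=(\mu_2-\mu_1)\!\int_{p_0}^0 a\,v_1(\mu_1)v_1(\mu_2)\,dp>0,
\]
so $R$ is strictly increasing after dividing by $a^3(0)v_1(0;\mu_1)v_1(0;\mu_2)>0$. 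For concavity, set $\widetilde{v}:=v_1/v_1(0)$, so that $\widetilde{v}(p_0)=0$ and $\widetilde{v}(0)=1$ independently of $\mu$; then $u:=\partial_\mu\widetilde{v}$ satisfies $(a^3 u')'-\mu a u = a\widetilde{v}$ with $u(p_0)=u(0)=0$. Testing the ODE for $\widetilde{v}$ by $u$ and integrating by parts, and then testing the ODE for $u$ by $u$ itself, one obtains
\[
\lambda^{3/2}R'(\mu)=\int_{p_0}^0 a\widetilde{v}^2\,dp,\qquad \lambda^{3/2}R''(\mu)=-2\!\int_{p_0}^0\bigl(a^3(u')^2+\mu a u^2\bigr)dp<0.
\]
Hence $G'(\mu)=\lambda^{3/2}R'(\mu)-\sigma$ is strictly decreasing, so $G$ has at most one local maximum and is eventually strictly decreasing.

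Using the explicit form $v_1(p;\lambda,0)=a^3(p_0)\int_{p_0}^p a^{-3}\,ds$, one finds $G(0)=\bigl(\int_{p_0}^0 a^{-3}\,dp\bigr)^{-1}-g$, which by \eqref{QU} is positive for $\lambda>\lambda_0$ and vanishes at $\lambda_0$. A standard WKB/Liouville--Green analysis of $(a^3v')'=\mu a v$ gives $R(\mu)\sim\sqrt{\mu/\lambda}$ as $\mu\to\infty$, so $G(\mu)\to-\infty$. Combined with the unimodal shape above, the intermediate value theorem produces a unique zero $\mu(\lambda)\in(0,\infty)$ for each $\lambda>\lambda_0$. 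At $\lambda=\lambda_0$ the zero $\mu=0$ is already present, and the count of additional zeros is governed by the sign of $G'(0)$; a variation-of-parameters computation for $u=\partial_\mu v_1|_{\mu=0}$ solving $(a^3 u')'=a\,v_1(\cdot;\lambda_0,0)$, followed by an interchange of integration, yields
\[
G'(0)\big|_{\lambda=\lambda_0}=g^2\!\int_{p_0}^0 a(p)\!\left(\int_{p_0}^p a^{-3}(s)\,ds\right)^{\!2}dp\,-\sigma,
\]
so \eqref{d2} is precisely $G'(0)<0$, which by unimodality forces $G<0$ on $(0,\infty)$ and hence $\mu(\lambda_0)=0$; otherwise one obtains an additional simple zero $\mu(\lambda_0)>0$.

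Finally, real-analyticity of $\mu(\lambda)$ on $(\lambda_0,\infty)$ follows from the analytic implicit function theorem applied to $W(0;\cdot,\cdot)$, which depends analytically on $(\lambda,\mu)$ through the initial value problems \eqref{ERU}; transversality $\partial_\mu W(0;\lambda,\mu(\lambda))=v_1(0)G'(\mu(\lambda))<0$ holds because $G$ crosses zero after its maximum. Strict monotonicity $\mu'(\lambda)>0$ comes from the implicit derivative together with a direct sign check for $\partial_\lambda W$, tracking the dependence $a=\sqrt{\lambda-2\Gamma}$. The divergence $\mu(\lambda)\to\infty$ follows because $\mu(\lambda)$ lies beyond the unique maximizer $\mu^\ast(\lambda)$ of $G$, which by WKB satisfies $\mu^\ast(\lambda)\sim\lambda^2/(4\sigma^2)\to\infty$. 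The main obstacle throughout is establishing strict concavity of $R$: once available, everything unwinds cleanly, but concavity itself requires the vanishing of $\partial_\mu\widetilde{v}$ at \emph{both} endpoints, which is exactly why the boundary-normalized solution $\widetilde{v}=v_1/v_1(0)$, rather than $v_1$ itself, must be used in the integration-by-parts identity.
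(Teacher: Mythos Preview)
Your proposal is essentially correct and follows the standard route. The paper itself does not give a self-contained proof: it refers to Lemmas~4.3--4.7 of \cite{CM13xx} (the Martin--Matioc piecewise-constant vorticity paper) and merely remarks that passing from piecewise-constant to $L_\infty$ vorticity requires minor modifications. The argument in that reference proceeds along precisely the Sturm--Liouville lines you describe: one studies the Dirichlet-to-Neumann ratio $R(\mu)=v_1'(0)/v_1(0)$, proves its strict monotonicity and strict concavity via variation with respect to $\mu$ and energy-type integration by parts (your normalisation $\widetilde v=v_1/v_1(0)$ is exactly the device used to make both boundary terms of $u=\partial_\mu\widetilde v$ vanish), deduces the unimodal shape of $G(\mu)=\lambda^{3/2}R(\mu)-g-\sigma\mu$, and then applies the analytic implicit function theorem. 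Your computation of $G'(0)\big|_{\lambda=\lambda_0}$ reproducing the left side of \eqref{d2} is also what is done there. So the approaches coincide.

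Two places in your sketch deserve a little more care, though neither threatens the overall strategy. First, the asymptotic $R(\mu)\sim\sqrt{\mu/\lambda}$ and the claim $\mu^\ast(\lambda)\sim\lambda^2/(4\sigma^2)$ are asserted via WKB without justification; they are correct, but since here $a\in C^{1-}$ only, it is cleaner either to compare against the constant-coefficient problem or to use directly your exact identity $\lambda^{3/2}R'(\mu)=\int_{p_0}^0 a\widetilde v^{\,2}\,dp$ and show this integral tends to zero (by the exponential concentration of $\widetilde v$ near $p=0$), which already forces $G'(\mu)\to-\sigma$ and hence $G(\mu)\to-\infty$. Second, the strict monotonicity $\mu'(\lambda)>0$ is asserted via ``a direct sign check for $\partial_\lambda W$'': this step is not quite immediate, because the $\lambda$-dependence of $v_1$ through $a=\sqrt{\lambda-2\Gamma}$ affects both $v_1(0)$ and $v_1'(0)$. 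One clean way is to show instead that for each fixed $\mu>0$ the map $\lambda\mapsto G(\lambda,\mu)$ is strictly increasing, which follows from a variation-of-parameters identity in $\lambda$ analogous to the one you wrote for $\mu$; then the unique zero $\mu(\lambda)$ of the concave function $G(\lambda,\cdot)$ moves strictly to the right with $\lambda$, and $\mu(\lambda)\to\infty$ follows because $G(\lambda,M)\to\infty$ for each fixed $M$.
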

\begin{proof}
 The proof is similar to that of the Lemmas 4.3 - 4.7 in \cite{CM13xx}, the restriction  $\gamma\in L_\infty((p_0,0))$ leading only to minor modifications.
 Therefore, we omit it.
\end{proof}

In virtue of Proposition \ref{P:4}, there exists a smallest positive integer $n$ ($n=1$ if \eqref{d2} is satisfied) such that
for all $k\in\N\setminus\{0\},$ there exists a unique constant $ \lambda_k\in(\lambda_0,\infty)$ with the property that 
\begin{align}\label{LP}
 \mu(\lambda_k):=(kn)^2.
\end{align}
Because $\mu$ is strictly increasing and recalling \eqref{lim}, we deduce that $\lambda_k\nearrow\infty$. 
Summarizing, if $k\geq1,$ we have that $W(0;\lambda_k,(ln)^2)= 0$, with $l\in\N,$ if and only if  $l=k.$
Consequently, $R_{\lambda_k,(ln)^2}$ has a non-trivial kernel if and only if $l=k.$
We have thus shown that, for all $k\geq1$, the kernel of the Fr\'echet derivative $\p_h\cF(\lambda_k,0)$ is one-dimensional. 
More precisely, we have
\begin{equation}\label{KER}
\ke\p_h\cF(\lambda_k,0)=\spa\{w_* \},
\end{equation}
whereby $w_*(q,p):=v_1(p)\cos(knq)$ and $v_1$ is the solution of the first system of \eqref{ERU} when setting $\mu=(kn)^2.$
That $w_*$ is an element of  $X$ follows easily from  $v_1\in C^{2-}([p_0,0]).$  \smallskip

\paragraph{\bf The transversality condition}
In order to apply the   theorem on bifurcation from simple eigenvalues due to Crandall and Rabinowitz \cite{CR71} to the operator equation \eqref{BP} we still need to prove that
\begin{align}\label{TC}
 \p_{\lambda h}\cF(\lambda_k,0)[w_*]\notin \im \p_{h}\cF(\lambda_k,0).
\end{align}
To this end, we need to   characterize the range $\im \p_h\cF(\lambda_k,0).$
\begin{lemma}\label{L:R} Given   $k\in\N$ with $k\geq1$, the pair $(f, \varphi)\in Y$, with $f:=\p_q\phi_1+\p_p\phi_2$, belongs to $ \im \p_{h}\cF(\lambda_k,0)$ if and only if  we have
 \begin{equation}\label{RN}
  \int_\0 \phi_1w_{*q}+\phi_2w_{*p}\,d(q,p)-\int_{\s\times \{0\}} \phi_2  w_*\, dq-\sigma(1+(kn)^2)\int_{\s\times \{0\}} \varphi  w_*\, dq=0.
 \end{equation}
\end{lemma}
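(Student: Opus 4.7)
The plan is to identify the left-hand side of \eqref{RN} as a bounded linear functional $\Lambda\colon Y\to\R$, to verify that $\Lambda$ vanishes on the range of $\p_h\cF(\lambda_k,0)=(L,T)$, and then to check that $\Lambda$ is both well-defined (independent of the decomposition of $f$) and nontrivial. By Lemma \ref{L:2} the operator $(L,T)$ is Fredholm of index zero, and by \eqref{KER} its kernel is the one-dimensional span of $w_*$, so its image has codimension one in $Y$. Together with $\im(L,T)\subset\ke\Lambda$ and $\Lambda\not\equiv 0$, this forces $\im(L,T)=\ke\Lambda$, which is precisely the characterization \eqref{RN}.

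For necessity, assume $(L,T)w=(f,\varphi)$ for some $w\in X$. I would pair the weak form of $Lw=f$ with the admissible test function $\chi_\e(p)w_*(q,p)$, where $\chi_\e\in C_c^\infty((p_0,0))$ increases to $1$. Passing to the limit $\e\to 0$, the contribution at $p=p_0$ vanishes because $v_1(p_0)=0$ forces $w_*$ to vanish there, while a nontrivial boundary term survives at $p=0$. Symmetrically, testing the identity $Lw_*=0$ against $\chi_\e w$ (where the lower boundary term now vanishes because $w|_{p=p_0}=0$) produces a mirror relation; subtracting cancels the bulk integral $\int_\0 (w_qw_{*q}/H_p+w_pw_{*p}/H_p^3)\,d(q,p)$ and leaves
\begin{equation*}
\int_\0 \phi_1 w_{*q}+\phi_2 w_{*p}\,d(q,p)-\int_{\s\times\{0\}}\phi_2 w_*\,dq=\int_{\s\times\{0\}}\frac{w\,w_{*p}-w_p\,w_*}{H_p^3}\,dq.
\end{equation*}

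To evaluate the Wronskian-type boundary integral, I would apply $(1-\p_q^2)$ to the equations $Tw=\varphi$ and $Tw_*=0$, obtaining the pointwise trace identities $\lambda_k^{3/2}w_p=gw-\sigma w_{qq}-\sigma(1-\p_q^2)\varphi$ and $\lambda_k^{3/2}w_{*p}=gw_*-\sigma w_{*qq}$ on $p=0$. Substituting, and recalling $H_p^3(0)=\lambda_k^{-3/2}$, the $g$-contributions cancel, the antisymmetric piece $\int_\s(w_{qq}w_*-w\,w_{*qq})\,dq$ vanishes by periodic integration by parts in $q$, and the relation $w_{*qq}|_{p=0}=-(kn)^2 w_*|_{p=0}$ gives $\int_\s\varphi\,(1-\p_q^2)w_*\,dq=(1+(kn)^2)\int_\s\varphi w_*\,dq$. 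Assembling these identities produces exactly \eqref{RN}.

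For well-definedness, the same cutoff calculation applied to a decomposition satisfying $\p_q\phi_1+\p_p\phi_2=0$ as distributions gives $\int_\0\phi_1 w_{*q}+\phi_2 w_{*p}\,d(q,p)=\int_{\s\times\{0\}}\phi_2 w_*\,dq$, so $\Lambda$ depends only on $(f,\varphi)$. For nontriviality, the choice $f=0$ and $\varphi(q)=\cos(knq)\in Y_2$ yields $\Lambda(0,\varphi)=-\sigma(1+(kn)^2)\pi\,v_1(0)$, and $v_1(0)\neq 0$, because otherwise $\lambda_k^{3/2}v_1'(0)=(g+\sigma(kn)^2)v_1(0)$ would force $v_1'(0)=0$, and uniqueness for the ODE in \eqref{ERU} would give $v_1\equiv 0$, contradicting $v_1'(p_0)=1$. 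The principal technical point is to carry out the cutoff-limit integration by parts rigorously: $Y_1$ is defined only up to the choice of representative $(\phi_1,\phi_2)$, and since $w,w_*\in C^{1+\alpha}(\ov\0)$ the boundary trace terms and the pointwise identities on $p=0$ must be handled via continuity of the trace operator on $C^{1+\alpha}(\ov\0)$.
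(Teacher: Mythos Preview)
Your proposal is correct and follows essentially the same route as the paper: a cutoff argument against $w_*$ for the bulk identity, manipulation of the boundary operator $T$ to extract the $\sigma(1+(kn)^2)\int\varphi w_*$ term, and the codimension-one Fredholm argument to upgrade necessity to sufficiency. The only cosmetic difference is that the paper handles the boundary step via the $L_2$-symmetry of $(1-\p_q^2)^{-1}$ and the eigenfunction identity \eqref{BBB1} (avoiding the need to write $w_{qq}$ for a trace that is a priori only $C^{1+\alpha}$), whereas you apply $(1-\p_q^2)$ and integrate by parts in $q$; you are also more explicit than the paper about the well-definedness and nontriviality of the functional $\Lambda$.
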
 
\begin{proof}
Let us presuppose that  there exists $w\in X$ such that $(L,T)w=(f,\varphi).$ 
For every positive integer $m$, we define the function $\psi_m\in H^1_0((p_0,0))$ by the relation
\[
\psi_m(p):=\left\{
\begin{array}{llll}
 1&\text{for $p_0+1/m\leq x\leq -1/m,$}\\
 m(x-p_0)&\text{for $p_0\leq x\leq p_0+1/m,$}\\
 -mx&\text{for $-1/m\leq x\leq 0.$}
\end{array}
\right.
\]
Then $\psi_mw_*\in H^1_0(\0)=\overline{C^1_0(\0)}^{\|\cdot\|_{H^1}},$ and,  since $Lw=f$ in $Y_1,$ a density argument leads us to the following relation
\begin{align*}
\int_\0\frac{w_q}{H_p}\psi_m\p_qw_*+\frac{w_p}{H_p^3}\p_p(\psi_mw_*)\, d(q,p)= \int_\0 \phi_1\psi_m\p_qw_*+\phi_2\p_p(\psi_mw_*)\,d(q,p).
\end{align*}
Letting $m\to\infty$, it is easy to see that
\begin{align}
\int_\0\frac{w_qw_{*q}}{H_p} +\frac{w_p w_{*p}}{H_p^3} \, d(q,p)-\int_{\s\times \{0\}} \frac{w_p w_*}{H_p^3}\, dq= \int_\0 \phi_1w_{*q}+\phi_2w_{*p}\,d(q,p)-\int_{\s\times \{0\}} \phi_2  w_*\, dq.\label{R1a}
\end{align}
On the other hand, if we multiply  the relation $Tw=\varphi$ by $w_*$ and integrate it over a period, gives, after exploiting the relation 
\begin{align}\label{BBB1}
(1-\p_q^2)^{-1}(\tr_0 w_*)=\frac{\tr_0w_*}{1+(kn)^2}
\end{align}
and the  symmetry of the operator $(1-\p_q^2)^{-1}$, the following integral relation
\begin{align}
(g+\sigma(kn)^2)\int_{\s\times \{0\}} w w_*\, dq-\int_{\s\times \{0\}} \frac{w_p w_*}{H_p^3}  \, dq= \sigma(1+(kn)^2)\int_{\s\times \{0\}} \varphi  w_*\, dq.\label{R2a}
\end{align}
Subtracting  \eqref{R2a} from \eqref{R1a}, we find that
\begin{align}
 &\int_\0 \phi_1w_{*q}+\phi_2w_{*p}\,d(q,p)-\int_{\s\times \{0\}} \phi_2  w_*\, dq-\sigma(1+(kn)^2)\int_{\s\times \{0\}} \varphi  w_*\, dq\nonumber\\
 &=\int_\0\frac{w_qw_{*q}}{H_p} +\frac{w_pw_{*p}}{H_p^3}  \, d(q,p)-(g+\sigma(kn)^2)\int_{\s\times \{0\}} w w_*\, dq.\label{RR}
\end{align}
However, recalling that $(L,T)w_*=0$, similar  arguments to those presented above show that the right-hand side of \eqref{RR} is zero, and we obtain the desired relation \eqref{RN}.
To finish the proof, let us observe that the relation \eqref{RN} defines a closed subspace of $Y$ that has codimension one and contains the range $\im \p_h\cF(\lambda_k,0).$
Since the range also has codimension one, we conclude that every pair $(f,\varphi)$ that satisfies \eqref{RN} belongs $\im \p_h\cF(\lambda_k,0).$

\end{proof}

\begin{lemma}\label{L:TC} 
The transversality condition \eqref{TC} is satisfied for all $k\in\N$ with $k\geq 1$.
\end{lemma}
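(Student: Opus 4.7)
The plan is to apply Lemma \ref{L:R}: writing $\p_{\lambda h}\cF(\lambda_k, 0)[w_*] = (f, \varphi)$ with $f = \p_q \phi_1 + \p_p \phi_2$ in divergence form, the transversality condition \eqref{TC} is equivalent to the strict non-vanishing of the left-hand side of \eqref{RN} evaluated at this triple $(\phi_1, \phi_2, \varphi)$. The whole task therefore reduces to computing this mixed derivative and checking the sign of one scalar integral.

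First I would determine $\p_{\lambda h}\cF(\lambda_k, 0)[w_*]$. The only $\lambda$-dependence in the formulas \eqref{L1} for $L = \p_h\cF_1(\lambda,0)$ and $T = \p_h\cF_2(\lambda,0)$ enters through $H_p = 1/\sqrt{\lambda - 2\Gamma}$; writing $a := 1/H_p$, I have $\p_\lambda(1/H_p) = 1/(2a)$ and $\p_\lambda(1/H_p^3) = 3a/2$. Differentiating in $\lambda$ then yields
\[
\p_{\lambda h}\cF_1(\lambda_k,0)[w_*] = \left(\frac{w_{*q}}{2a}\right)_q + \left(\frac{3a\, w_{*p}}{2}\right)_p, \qquad \p_{\lambda h}\cF_2(\lambda_k,0)[w_*] = -\frac{3\lambda_k^{1/2}}{2\sigma}\,(1-\p_q^2)^{-1}\tr_0 w_{*p},
\]
so I can read off $\phi_1 = w_{*q}/(2a)$, $\phi_2 = 3a\, w_{*p}/2$, and the above boundary expression for $\varphi$.

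I would then substitute into \eqref{RN}. Using the product structure $w_*(q,p) = v_1(p)\cos(knq)$ together with \eqref{BBB1} and $\int_0^{2\pi}\cos^2(knq)\,dq = \int_0^{2\pi}\sin^2(knq)\,dq = \pi$, every term decouples into a factor of $\pi$ times a $p$-integral in $v_1, v_1'$. The decisive point is that the two boundary contributions in \eqref{RN}, namely the one coming from $\phi_2 w_*$ and the one coming from $\varphi w_*$, cancel exactly, because $a(0) = \sqrt{\lambda_k - 2\Gamma(0)} = \lambda_k^{1/2}$. What survives is the interior integral
\[
\frac{\pi}{2}\left((kn)^2\int_{p_0}^0 \frac{v_1^2}{a}\, dp + 3\int_{p_0}^0 a\, v_1'^2\, dp\right),
\]
which is strictly positive since $a > 0$ on $[p_0,0]$, $kn \geq 1$, and $v_1'(p_0) = 1$ prevents $v_1'$ from vanishing identically. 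Hence \eqref{RN} fails for $(f,\varphi) = \p_{\lambda h}\cF(\lambda_k,0)[w_*]$, and \eqref{TC} follows.

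The step I expect to require the most care is the exact cancellation of the two boundary integrals; it is not a coincidence but reflects the compatibility between the coefficient $\lambda^{3/2}$ appearing in $T$ and the boundary value $1/H_p^3(0) = \lambda^{3/2}$ inherited from $L$ through integration by parts. Once that cancellation is verified, the remaining quadratic form in $v_1$ and $v_1'$ is manifestly positive, and the claim is immediate.
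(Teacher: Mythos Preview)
Your proof is correct and follows the same route as the paper: compute $\p_{\lambda h}\cF(\lambda_k,0)[w_*]$, plug into the characterization \eqref{RN}, observe that the two boundary contributions cancel, and conclude from the manifest positivity of the remaining interior integral. The only differences are cosmetic (you work with $a=1/H_p$ rather than $H_p$ and spell out the boundary cancellation and the $p$-integrals explicitly, whereas the paper records directly $\int_\0 \frac{H_pw_{*q}^2}{2}+\frac{3w_{*p}^2}{2H_p}\,d(q,p)>0$).
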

\begin{proof}
 Differentiating  \eqref{L1} with respect to $\lambda$    we obtain, in virtue of $\p_\lambda H_p=-1/(2H_p^3)$, that  
 \begin{align*}
  \p_{\lambda h}\cF(\lambda_k,0)[w_*]=\left(\left(\frac{H_pw_{*q}}{2 }\right)_q+\left(\frac{3w_{*p}}{2H_p}\right)_p,-\frac{3\lambda_k^{1/2}}{2\sigma}(1-\p_q^2)^{-1}\tr_0w_{*p}\right).
 \end{align*}
 We only need to check that the relation \eqref{RN} is not satisfied by   $\p_{\lambda h}\cF(\lambda_k,0)[w_*]\in Y$.
To this end, we set
\[
\phi_1:=\frac{H_pw_{*q}}{2 }, \qquad\phi_2:=\frac{3w_{*p}}{2H_p} , \qquad \varphi:=-\frac{3\lambda_k^{1/2}}{2\sigma}(1-\p_q^2)^{-1}\tr_0w_{*p},
\]
and, recalling  \eqref{BBB} and \eqref{BBB1},  we conclude that
\begin{align*}
 &\int_\0 \phi_1w_{*q}+\phi_2w_{*p}\,d(q,p)-\int_{\s\times \{0\}} \phi_2  w_*\, dq-\sigma(1+(kn)^2) \int_{\s\times \{0\}}\varphi  w_*\, dq\\
 &=\int_\0 \frac{H_pw_{*q}^2}{2 } +\frac{3w_{*p}^2}{2H_p} \,d(q,p)>0.
\end{align*}
\end{proof}

Gathering \eqref{BP0}, \eqref{BP1}, \eqref{KER}, Proposition \ref{P:4} and the Lemmas \ref{L:2} and \ref{L:TC}, the theorem on bifurcation from simple eigenvalues due to Crandall and Rabinowitz
\cite{CR71} yields the following result for the bifurcation problem \eqref{BP}.
\begin{thm}[Local bifurcation]\label{T:LB}
Let  $\gamma\in L_\infty((p_0,0))$ be given.
Then,  there exists a positive integer $n$ and, for each  $k\in\N\setminus\{0\} $, there exists   $\e_k>0$ and a real-analytic  curve 
\[\text{$(\ov\lambda_k,h_k):(\lambda_k-\e_k,\lambda_k+\e_k)\to  (2\max_{[p_0,0]}\Gamma,\infty)\times X,$  }\]
consisting only of solutions of the problem
 \eqref{BP}.
Moreover,   we have that
\[
\begin{aligned}
&\ov\lambda_k(s)=\lambda_k+O(s),\\
& h_k(s)=sw_*+O(s^2),
\end{aligned} \qquad\qquad \text{as $s\to0$,}
\]
whereby $w_*\in X$ is given by $w_*(q,p):=v_1(p)\cos(knq)$ and $v_1$ denotes the solution of the first system of \eqref{ERU} when  $\mu=(kn)^2.$
Moreover, in a neighborhood of $(\lambda_k,0),$ the solutions of \eqref{BP} are either trivial or are located on the local curve $(\ov\lambda_k,h_k)$.
If the condition \eqref{d2} is satisfied, then $n=1$.
\end{thm}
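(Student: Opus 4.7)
The plan is to assemble the ingredients already collected in the previous pages and feed them into the analytic version of the Crandall--Rabinowitz bifurcation theorem from simple eigenvalues. No new analytical work is required; the work is purely organizational.

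First I would fix the integer $n$ as the smallest positive integer supplied by Proposition \ref{P:4}, so that \eqref{LP} defines a strictly increasing sequence $(\lambda_k)_{k\geq 1}\subset(\lambda_0,\infty)$ with $\lambda_k\nearrow\infty$. Under the assumption \eqref{d2} Proposition \ref{P:4} also yields $\mu(\lambda_0)=0$, so in that case we may take $n=1$. Next I would recall the three structural facts already established: the map $\cF:(2\max_{[p_0,0]}\Gamma,\infty)\times X\to Y$ is real-analytic by \eqref{BP0}; the line of laminar flows $\{(\lambda,0)\}$ consists of trivial solutions by \eqref{BP1}; and, at each $\lambda_k$, $\p_h\cF(\lambda_k,0)\in\kL(X,Y)$ is a Fredholm operator of index zero (Lemma \ref{L:2}).

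Second I would verify one-dimensionality of the kernel at $\lambda=\lambda_k$. By the Fourier decomposition argument leading up to \eqref{KER}, a function $w\in\ke\p_h\cF(\lambda,0)$ has Fourier coefficients $w_l$ that solve \eqref{E:m} with $\mu=(ln)^2$; by Proposition \ref{P:2} these can be non-trivial only if $W(0;\lambda,(ln)^2)=0$, and by Proposition \ref{P:4} combined with \eqref{LP} this happens, at $\lambda=\lambda_k$, for exactly one $l\in\N$, namely $l=k$. Hence $\ke\p_h\cF(\lambda_k,0)=\spa\{w_*\}$ with $w_*(q,p)=v_1(p)\cos(knq)$, which lies in $X$ because $v_1\in C^{2-}([p_0,0])$ and $w_*$ is even and $2\pi/n$-periodic in $q$.

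Third I would invoke the transversality condition of Lemma \ref{L:TC}, which states precisely that $\p_{\lambda h}\cF(\lambda_k,0)[w_*]\notin\im\p_h\cF(\lambda_k,0)$. With all four hypotheses of the Crandall--Rabinowitz theorem \cite{CR71} now in place, together with real-analyticity of $\cF$, the theorem yields $\e_k>0$ and a real-analytic curve $s\mapsto(\ov\lambda_k(s),h_k(s))$ through $(\lambda_k,0)$ consisting of solutions of \eqref{BP}, with the local expansions $\ov\lambda_k(s)=\lambda_k+O(s)$ and $h_k(s)=sw_*+O(s^2)$ as $s\to 0$, and with the property that in a neighbourhood of $(\lambda_k,0)$ every solution of $\cF(\lambda,h)=0$ is either trivial or lies on this curve. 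There is no substantial obstacle in this proof; the only point requiring care is to correctly identify the parameter values $\lambda_k$ and the integer $n$, and this is exactly what Proposition \ref{P:4} and \eqref{LP} provide.
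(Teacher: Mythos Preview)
Your proposal is correct and follows essentially the same approach as the paper: the paper's ``proof'' consists of a single sentence gathering \eqref{BP0}, \eqref{BP1}, \eqref{KER}, Proposition~\ref{P:4}, and Lemmas~\ref{L:2} and~\ref{L:TC}, then invoking Crandall--Rabinowitz \cite{CR71}. You have simply made that assembly explicit.
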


The points on the curves $(\ov\lambda_k,h_k), $ $k\geq1$ correspond to weak solutions of the problem \eqref{PB'} and \eqref{PBC}.
The next lemma shows, under an additional regularity assumption, that all  weak solutions $h\in C^{1+\alpha}(\ov\0)$ of \eqref{PB'} and \eqref{PBC} 
are in fact strong solutions (even classical solutions if $\gamma\in C([p_0,0])$). 
This additional regularity assumption is shown later on, cf.  Proposition \ref{P:1}, 
 to be a priori satisfied by the weak solutions $h\in C^{1+\alpha}(\ov\0)$ of \eqref{PB'} and \eqref{PBC} even when the vorticity function is merely integrable.
\begin{lemma}\label{L:Re} Assume that $h\in C^{1+ \alpha}(\ov\0)$ is a weak solution of \eqref{PB'} and \eqref{PBC} corresponding to a
 vorticity function $\gamma\in L_\infty((p_0,0)) $  (resp. $\gamma\in C([p_0,0])$)
Additionally, we assume that $h_q\in C^{1+\alpha}(\ov\0).$
Then, $h\in W^2_\infty(\0)$ (resp. $h\in C^{2}(\ov\0)$) and $h$ satisfies the first  equation of \eqref{PB} almost everywhere in $\0$. 
\end{lemma}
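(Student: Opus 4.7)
The plan is to leverage the additional regularity $h_q\in C^{1+\alpha}(\ov\0)$ to extract existence and boundedness of $h_{pp}$ from the weak formulation \eqref{PB1}, and then verify the first equation of \eqref{PB} pointwise almost everywhere. As a first step, I would integrate by parts in $q$ in \eqref{PB1}---legitimate because the hypothesis now ensures that $h_q/h_p$ is classically differentiable in $q$, and $2\pi/n$-periodicity eliminates the boundary contributions---to rewrite \eqref{PB1} as the distributional identity
\[
\p_p(\Gamma+\Phi)=\p_q(h_q/h_p)\quad\text{in }\mathcal{D}'(\0),\qquad\Phi:=\frac{1+h_q^2}{2h_p^2}.
\]
The hypothesis yields $h_{qq},h_{pq}\in C^\alpha(\ov\0)$, and together with $h_p\in C^\alpha(\ov\0)$ and the nondegeneracy $h_p\geq m>0$ granted by \eqref{PBC}, this gives the classical expression $\p_q(h_q/h_p)=h_{qq}/h_p-h_qh_{pq}/h_p^2\in C^\alpha(\ov\0)$. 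Hence the distributional $p$-derivative of $\Phi$ coincides with $\p_q(h_q/h_p)-\gamma$ and lies in $L_\infty(\0)$ (respectively, in $C(\ov\0)$ when $\gamma\in C([p_0,0])$).

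The core step is to transfer this $p$-regularity back to $h_p$ itself by inverting the algebraic relation $h_p^2=(1+h_q^2)/(2\Phi)$. Since $h_p$ is continuous on the compactum $\ov\0$ and bounded below by $m>0$, $\Phi$ is bounded away from both zero and infinity, so $1/\Phi$ is Lipschitz in $p$ uniformly in $q$ with distributional $p$-derivative in $L_\infty(\0)$. Combining this with $1+h_q^2\in C^{1+\alpha}(\ov\0)$ via the product rule for weakly differentiable functions, $h_p^2$ admits a distributional $p$-derivative in $L_\infty(\0)$. The factorization
\[
h_p(q,p_1)-h_p(q,p_2)=\frac{h_p^2(q,p_1)-h_p^2(q,p_2)}{h_p(q,p_1)+h_p(q,p_2)}
\]
together with $h_p\geq m>0$ then shows that $h_p$ itself is uniformly Lipschitz in $p$, so that $h_{pp}\in L_\infty(\0)$. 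Combined with $h_{qq},h_{pq}\in C^\alpha(\ov\0)$ this yields $h\in W^2_\infty(\0)$; running the same chain of implications with $L_\infty$ replaced by $C(\ov\0)$ throughout gives $h\in C^2(\ov\0)$ when $\gamma\in C([p_0,0])$.

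With $h_{pp}$ defined pointwise almost everywhere, one may expand $\p_p(\Gamma+\Phi)$ via the classical chain and quotient rules, substitute back into the distributional identity, and clear the denominator by multiplying through by $h_p^3$ to recover
\[
(1+h_q^2)h_{pp}-2h_ph_qh_{pq}+h_p^2h_{qq}-\gamma(p)h_p^3=0
\]
almost everywhere in $\0$ (pointwise on $\ov\0$ in the continuous case), which is the first equation of \eqref{PB}. The main obstacle is the middle paragraph: promoting distributional $p$-information on $\Phi$ to a genuine $L_\infty$ bound on $h_{pp}$ requires the uniform ellipticity $h_p\ge m>0$ to invert the square-root relation $h_p=\sqrt{h_p^2}$ without loss of regularity, and the product and chain rules for weakly differentiable functions must be applied carefully in this borderline regularity framework.
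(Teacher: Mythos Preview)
Your proof is correct and follows essentially the same approach as the paper: both use the weak formulation to recognise that $\p_p\bigl(\Gamma+\tfrac{1+h_q^2}{2h_p^2}\bigr)=\p_q(h_q/h_p)\in C^\alpha(\ov\0)$ in the distributional sense, and then invert the algebraic relation using the lower bound \eqref{PBC} to conclude $h_p\in C^{1-}(\ov\0)$ (resp.\ $C^1$). The paper's write-up is terser---it also records $\p_q\bigl(\Gamma+\tfrac{1+h_q^2}{2h_p^2}\bigr)\in C^\alpha(\ov\0)$ and hence asserts $\Gamma+\tfrac{1+h_q^2}{2h_p^2}\in C^{1-}(\ov\0)$ in one step, rather than threading through $\Phi\to 1/\Phi\to h_p^2\to h_p$ as you do---but the substance is identical.
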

\begin{proof}
 Because $\p_p(h_q)\in C^\alpha(\ov\0),$ we deduce that $h_p$ is differentiable with respect to $q$ and that $\p_qh_p=\p_ph_q\in C^\alpha(\ov\0).$
Therefore, we have that
\[
\p_p\left(\Gamma+\frac{1+h_q^2}{2h_p^2}\right)=\p_q\left(\frac{h_q}{h_p}\right)\in C^\alpha(\ov\0)\qquad\text{and}
\qquad\p_q\left(\Gamma+\frac{1+h_q^2}{2h_p^2}\right)\in C^\alpha(\ov\0),
\]
the first relation being understood in the sense of distributions. 
Particularly, we deduce that
\[
 \Gamma+\frac{1+h_q^2}{2h_p^2}  \in C^{1-}(\ov\0).
\]
But, since $h_p$ satisfies \eqref{PBC} and it is also bounded, this implies $h_p\in C^{1-}(\ov\0).$
Summarizing, we have shown that $h\in C^{2-}(\ov\0)=W^2_\infty(\0).$
The same arguments lead us $h\in C^{2}(\ov\0)$ if $\gamma\in C([p_0,0]).$
The final part of the claim follows now directly from \eqref{PB1}, cf. \cite[Lemma 7.5]{GT01}. 
\end{proof}

\section{Regularity of weak solutions}\label{Sec:5}
We consider now an arbitrary non-laminar weak solution $h\in C^{1+\alpha}(\ov\0)$  of the water wave problem \eqref{PB'}, when requiring merely integrability of the vorticity function.
Assuming  that $h$ satisfies also the condition \eqref{PBC},  we establish additional regularity properties for this  weak solution.
The main result of this section is the following theorem.

 \begin{thm}[Regularity result]\label{MT2}
 Assume that $\gamma\in L_1((p_0,0))$ and let $\alpha\in(0,1)$ be given. 
Given a weak solution $h\in C^{1+\alpha}(\ov\0)$ of \eqref{PB'} that satisfies \eqref{PBC}, we have that $\p_q^m h\in C^{1+\alpha}(\ov\0)$ for all $m\in\N.$ 
Moreover, there exists a constant $L>1$
 with the property that
 \begin{equation}\label{E}
  \|\p_q^m h\|_{{1+\alpha}}\leq L^{m-2}(m-3)!
 \end{equation}
for all integers $m\geq3.$
\end{thm}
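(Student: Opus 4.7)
The plan is to exploit the translation invariance of \eqref{PB'} in the $q$-variable, combined with Schauder-type estimates for the nonlocal boundary value problem, and then to run an induction on $m$ to obtain the quantitative bound \eqref{E}. The starting observation is that $\gamma$, and hence the antiderivative $\Gamma$, depends on $p$ alone, so that differentiating the weak formulation \eqref{PB1} once with respect to $q$ eliminates the vorticity contribution altogether. Setting $v_m:=\p_q^m h$, one checks by induction that $v_m$ satisfies, in the weak sense, a \emph{linear} divergence-form equation
\[
\int_\0 (A\nabla v_m)\cdot\nabla\phi\,d(q,p) = \int_\0 \bigl(\phi_1^{(m)}\phi_q + \phi_2^{(m)}\phi_p\bigr)\,d(q,p), \qquad \phi\in C^1_0(\0),
\]
where $A=A(h_p,h_q)$ is a symmetric, uniformly elliptic (by \eqref{PBC}) matrix of $C^\alpha$-coefficients \emph{independent of $m$}, and where $\phi_1^{(m)},\phi_2^{(m)}\in C^\alpha(\ov\0)$ are built polynomially (via the Fa\`a di Bruno formula) from the lower-order derivatives $\p_q^j h$, $\p_p\p_q^j h$ with $0\le j\le m-1$. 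Differentiating the nonlocal boundary condition of \eqref{PB'} and using $(1-\p_q^2)^{-1}\in\kL(C^\alpha(\s),C^{2+\alpha}(\s))$ produces a boundary equation of the same structure
\[
v_m + (1-\p_q^2)^{-1}\tr_0\Psi_m = 0 \qquad\text{on $p=0$,}
\]
with $\Psi_m\in C^\alpha(\s)$ again polynomial in the same lower-order data, together with $v_m=0$ on $p=p_0$.

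The Schauder theory underlying Lemma~\ref{L:2} applies \emph{uniformly in $m$} to this linear nonlocal boundary value problem, and yields
\[
\|v_m\|_{C^{1+\alpha}(\ov\0)} \le C\bigl(\|\phi_1^{(m)}\|_\alpha + \|\phi_2^{(m)}\|_\alpha + \|\Psi_m\|_\alpha\bigr),
\]
with $C=C\bigl(\|h\|_{C^{1+\alpha}(\ov\0)},\min_{\ov\0} h_p\bigr)$ independent of $m$; the case $m=1$ already delivers the base regularity $h_q\in C^{1+\alpha}(\ov\0)$, and iteration gives $v_m\in C^{1+\alpha}(\ov\0)$ for every $m\in\N$. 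To derive the quantitative bound \eqref{E}, I would then run an induction on $m$: inserting the inductive hypothesis $\|v_j\|_{1+\alpha}\le L^{j-2}(j-3)!$ (with the natural convention $(j-3)!:=1$ for $j\le 3$) into the Fa\`a di Bruno expansions of $\phi_1^{(m)},\phi_2^{(m)}$ and $\Psi_m$, each term on the right-hand side decomposes into products of the form $\prod_{i=1}^{k}\p_q^{j_i}(\nabla h)$ with $j_1+\cdots+j_k\le m-1$, weighted by multinomial coefficients. Standard Cauchy-product estimates for such factorial sequences (of the type used in the classical analyticity proofs for free-boundary problems) then produce
\[
\|\phi_1^{(m)}\|_\alpha + \|\phi_2^{(m)}\|_\alpha + \|\Psi_m\|_\alpha \le K L^{m-3}(m-3)!,
\]
for a constant $K$ independent of $m$, after which enlarging $L$ once so that $CK\le L$ closes the induction and yields \eqref{E}.

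I expect the main obstacle to be precisely this combinatorial bookkeeping: extracting the sharp factorial $(m-3)!$ rather than the naive $m!$ requires careful separation of the "extremal" partitions (one index close to $m-1$, the remaining indices close to $1$) from the "balanced" partitions in the Fa\`a di Bruno expansion, together with a uniform handling of the multinomial weights. A secondary technical point is the $m$-uniformity of the Schauder constant $C$ above; this turns out to be automatic because the coefficient matrix $A$ and the smoothing boundary operator $(1-\p_q^2)^{-1}$ are fixed once $h$ is fixed, so only the right-hand side depends on $m$. Finally, the small-$m$ base cases $m\in\{1,2,3\}$, where the factorial in \eqref{E} is interpreted as $1$, are absorbed by enlarging $L$ once at the end. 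The quickly verified assertion $\p_q^m h\in C^{1+\alpha}(\ov\0)$ is thus the easy part, while the quantitative estimate \eqref{E} constitutes the genuine content of the theorem.
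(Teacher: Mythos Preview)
Your overall architecture coincides with the paper's: translation invariance in $q$, a linear divergence-form equation for $v_m=\p_q^m h$ with $m$-independent principal part, a Schauder estimate, and an induction closed via product/Fa\`a di Bruno bounds. The combinatorial part you describe is exactly what the paper does in Lemmas~\ref{L:A1}--\ref{L:A3}.

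There is, however, a genuine gap in your treatment of the boundary condition. You assert that differentiating the nonlocal condition yields
\[
v_m+(1-\p_q^2)^{-1}\tr_0\Psi_m=0
\]
with $\Psi_m$ ``polynomial in the same lower-order data''. This is not true. The argument of $(1-\p_q^2)^{-1}$ in \eqref{PB'} is a function $G(h,h_q,h_p)$, and $\p_q^m G$ contains the leading terms
\[
(\p_{h_q}G)\,\p_q(v_m)+(\p_{h_p}G)\,\p_p(v_m),
\]
i.e.\ the full gradient of $v_m$ on $p=0$; compare the left-hand side of the boundary condition in \eqref{WH}, where these appear explicitly as $a_1 w_q+a_2 w_p$. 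Consequently $\|\Psi_m\|_\alpha$ controls (and is controlled by) $\|v_m\|_{C^{1+\alpha}}$, and your Schauder inequality is circular as written.

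The paper's resolution, which you cannot bypass, is to exploit the \emph{extra} half-derivative of smoothing in $(1-\p_q^2)^{-1}$: one estimates
\[
\big\|(1-\p_q^2)^{-1}\tr_0\big(a_1(v_m)_q+a_2(v_m)_p\big)\big\|_{C^{1+\alpha}(\s)}
\le C\,\|\tr_0 v_m\|_{C^{1+\alpha/2}(\s)},
\]
interpolates $C^{1+\alpha/2}(\s)$ between $C^0(\s)$ and $C^{1+\alpha}(\s)$ to obtain $\delta\|v_m\|_{1+\alpha}+C(\delta)\|v_m\|_0$ (this is \eqref{E:Int}), and absorbs the $\delta$-term into the left-hand side. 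What survives is a residual $\|v_m\|_0$, also missing from your estimate, which is harmless since $\|v_m\|_0\le\|v_{m-1}\|_{1+\alpha}$. Relatedly, your appeal to ``the Schauder theory underlying Lemma~\ref{L:2}'' is misplaced: that lemma gives Fredholm index zero at a laminar flow, not invertibility at a general $h$; the paper instead uses the Dirichlet Schauder estimate \cite[Theorem 8.33]{GT01}, which always carries $\|v_m\|_0$ on the right, together with the interpolation absorption just described.
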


An immediate consequence of Theorem \ref{MT2} is the following corollary.

\begin{cor}\label{C:1} 
 Let $h$  satisfy the assumptions of Theorem \ref{MT2}.
 Then, the wave surface and all the other streamlines are real-analytic graphs. 
\end{cor}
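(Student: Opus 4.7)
The plan is to deduce real-analyticity of each section $q \mapsto h(q,p)$, $p\in[p_0,0]$, directly from the derivative bounds supplied by Theorem \ref{MT2}. Since the wave surface is the graph $y = h(q,0)-d$ and every streamline of the steady flow is the graph $y = h(q,p)-d$ for a uniquely determined $p \in [p_0,0]$, real-analyticity of these sections in the variable $q$ is exactly what the corollary asserts.

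First I would fix $p \in [p_0,0]$ and set $f(q) := h(q,p)$. Theorem \ref{MT2} guarantees $f \in C^\infty(\s)$ together with the uniform derivative bound
\[
\sup_{q\in\s}\,|f^{(m)}(q)| \le \|\p_q^m h\|_{C^0(\ov\0)} \le \|\p_q^m h\|_{1+\alpha} \le L^{m-2}(m-3)!
\]
for every $m \ge 3$, where the constant $L>1$ is independent of both $q$ and $p$. This is the only input from the preceding theory that enters the argument.

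The key step is then a standard Taylor remainder estimate. For any $q_0\in\s$ and any integer $N \ge 2$, the Lagrange remainder form yields
\[
\Bigl|f(q) - \sum_{m=0}^{N}\frac{f^{(m)}(q_0)}{m!}(q-q_0)^m\Bigr| \le \frac{\|f^{(N+1)}\|_\infty}{(N+1)!}\,|q-q_0|^{N+1} \le \frac{L^{-2}\,(L|q-q_0|)^{N+1}}{(N+1)N(N-1)},
\]
where I used $(N+1)!/(N-2)! = (N+1)N(N-1)$. The right-hand side tends to $0$ as $N\to\infty$ whenever $L|q-q_0|<1$, so $f$ agrees on a neighborhood of every $q_0$ with its Taylor series, and the radius of convergence is bounded below by $1/L$ uniformly in $q_0$ and in $p$. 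Hence $h(\cdot,p)$ is real-analytic on $\s$.

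There is essentially no obstacle beyond checking that the factorial growth rate $(m-3)!$ is slow enough to produce analyticity; this is immediate because $(m-3)!/m! = 1/(m(m-1)(m-2))$ decays much faster than any geometric factor can grow. The corollary then follows by specialization: taking $p=0$ yields the real-analyticity of the wave profile $\eta=h(\cdot,0)-d$, and letting $p$ range over $[p_0,0]$ gives the same property for each of the remaining streamlines $y=h(\cdot,p)-d$.
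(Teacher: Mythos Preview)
Your proof is correct and follows the same approach as the paper's: both deduce real-analyticity of each section $h(\cdot,p)$ from the derivative bounds \eqref{E} of Theorem~\ref{MT2}, and then identify the streamlines with the graphs $q\mapsto h(q,p)-d$. The only difference is that the paper treats the passage from \eqref{E} to analyticity as immediate, whereas you spell out the standard Taylor--Lagrange remainder argument; this added detail is correct and welcome.
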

\begin{proof}
In view of Theorem \ref{MT2}, the function $h(\cdot,p)$ is  real-analytic for all $p\in[p_0,0].$
Since the streamlines of the flow coincide with  the graphs $[q\mapsto h(q,p)-d]$, the conclusion is obvious.
\end{proof}

\begin{rem}\label{R:1} The result established in Theorem \ref{MT2} is true also for pure capillary water waves.
Indeed, neglecting gravity  corresponds to putting $g=0$ in \eqref{PB'}, modification which does not influence  the proof  of Theorem \ref{MT2}.
Particularly, the streamlines and the wave profile of capillary water waves with a merely integrable vorticity function are real-analytic.
This generalizes previous results  \cite{LW12x, DH12}.
\end{rem}

We  first prove   that the distributional derivative $\p_q^mh$, $m\in\N$ and $m\geq1,$ is a weak  solution of a linear elliptic equation satisfying certain nonlocal boundary conditions.
This property appears as a consequence of the   invariance of the problem \eqref{PB'} with respect to horizontal translations.

\begin{prop}\label{P:1}
Let $\gamma\in L_1((p_0,0))$ be given and assume that $h\in C^{1+\alpha}(\ov\0)$ is a weak solution of \eqref{PB'} which satisfies additionally \eqref{PBC}.
Given $m\in\N $ with $m\geq1,$ the derivative $\p_q^m h$ belongs to $ C^{1+\alpha}(\ov \0)$ and it is a weak solution\footnote{$\p_q^mh$ solves the first equation of 
\eqref{WH} in the weak sense  and the two boundary conditions pointwise.} of the elliptic boundary value problem
 \begin{equation}\label{WH}
\left\{
\begin{array}{rllll}
\left(\frac{1}{h_p}\p_q w\right)_q-\left(\frac{h_q}{h_p^2}\p_p w\right)_q-\left(\frac{h_q}{h_p^2}\p_q w\right)_p+\left(\frac{1+h^2_q}{h_p^3}\p_p w\right)_p&=&\left( f_m\right)_q+\left(g_m\right)_p&\text{in}&\0,\\[1ex]
w-(1-\p_q^2)^{-1}\tr_0(w+a_1w_q+a_2w_p)&=&\sum_{i=1}^5\varphi^i_m&\text{on} &p=0,\\[2ex]
w&=&0&\text{on}&p=p_0,
\end{array}
\right.
\end{equation}
whereby $f_m, g_m, \varphi_m\in C^\alpha(\ov\0)$   are given by 
\begin{align*}
f_m:=&\sum_{k=1}^{m-1}\begin{pmatrix}m-1\\k\end{pmatrix}\left[-\p_q^k\left(\frac{1}{h_p}\right)\p_q(\p_q^{m-k}h) +\p_q^k\left(\frac{h_q}{h_p^2}\right)\p_p(\p_q^{m-k}h)\right],\\
g_m:=&\sum_{k=1}^{m-1}\begin{pmatrix}m-1\\k\end{pmatrix}\left[ \p_q^k\left(\frac{h_q}{h_p^2}\right)\p_q(\p_q^{m-k}h) -\p_q^k\left(\frac{1+h^2_q}{h_p^3}\right)\p_p(\p_q^{m-k}h)\right],
\end{align*}
and $\varphi_m^i, a_i\in C^\alpha(\ov\0)$ are defined as
\begin{align*}
a_1:=&-\frac{5(1+h_q^2)^{3/2}h_q}{2\sigma h_p^2}-\frac{3(2gh-Q)(1+h_q^2)^{1/2}h_q}{2\sigma}, \qquad a_2:= \frac{(1+h_q^2)^{5/2}}{\sigma h_p^3},\\
\varphi_m^1:=&-\frac{1}{2\sigma}(1-\p_q^2)^{-1}\tr_0\left(\sum_{k=1}^{m-1} \begin{pmatrix}m\\k\end{pmatrix}(\p_q^k(1+h_q^2)^{5/2})\p_q^{m-k}\frac{1}{h_p^2}\right),\\
\varphi_m^2:=&-\frac{g}{\sigma}(1-\p_q^2)^{-1}\tr_0\left(\sum_{k=0}^{m-1} \begin{pmatrix}m\\k\end{pmatrix}(\p_q^k(1+h_q^2)^{3/2})\p_q^{m-k}h\right),\\
\varphi_m^3:=&\frac{1}{\sigma}(1-\p_q^2)^{-1}\tr_0\left((1+h_q^2)^{5/2}\sum_{k=0}^{m-2} \begin{pmatrix}m-1\\k\end{pmatrix}(\p_q^{k+1}h_p)\p_q^{m-k-1}\frac{1}{h_p^3}\right),\\
\varphi_m^4:=&-\frac{5}{2\sigma}(1-\p_q^2)^{-1}\tr_0\left(\frac{1}{h_p^2}\sum_{k=0}^{m-2} \begin{pmatrix}m-1\\k\end{pmatrix}(\p_q^{k+1}h_q)\p_q^{m-k-1}(h_q(1+h_q^2)^{3/2})\right),\\
\varphi_m^5:=&-\frac{3}{2\sigma}(1-\p_q^2)^{-1}\tr_0\left((2gh-Q)\sum_{k=0}^{m-2} \begin{pmatrix}m-1\\k\end{pmatrix}(\p_q^{k+1}h_q)\p_q^{m-k-1}(h_q(1+h_q^2)^{1/2})\right).\\
\end{align*}
\end{prop}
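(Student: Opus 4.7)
The plan is to argue by induction on $m$, exploiting the $q$-translation invariance of \eqref{PB'} together with elliptic regularity theory for the linearized operator $(L,T)$ introduced in Lemma \ref{L:2}.

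For the base case $m=1$, I work with the finite $q$-difference quotient $\delta_\tau h(q,p):=\tau^{-1}(h(q+\tau,p)-h(q,p))$ for $\tau\neq 0$ small. Because the coefficients in \eqref{PB'} depend on $q$ only through $h$ itself (the vorticity $\gamma$ being a function of $p$ alone), inserting $h$ and its translate into the weak formulation and subtracting produces, after elementary manipulations, a linear elliptic problem for $\delta_\tau h$ of the exact form \eqref{WH} with $m=1$, in which case $f_1=g_1=0$ and each $\varphi_1^i=0$. The coefficients of this linear problem are averaged between $h$ and its $\tau$-translate, so they stay uniformly bounded in $\tau$ in $C^\alpha(\ov\0)$ thanks to $h\in C^{1+\alpha}(\ov\0)$ and the lower bound \eqref{PBC} on $h_p$. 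The nonlocal boundary term is preserved because $(1-\p_q^2)^{-1}$ commutes with translations in $q$. Schauder-type estimates for $(L,T)$---which, as shown in the proof of Lemma \ref{L:2}, is a compact perturbation of the isomorphism $(L,\tr_0):X\to Y$ furnished by \cite[Theorem 8.34]{GT01}---then yield uniform-in-$\tau$ $C^{1+\alpha}$ bounds on $\delta_\tau h$. Passing to the limit $\tau\to 0$ along a subsequence via Arzel\`a--Ascoli gives $\p_q h\in C^{1+\alpha}(\ov\0)$ and shows that $\p_q h$ solves the homogeneous version of \eqref{WH}.

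For the inductive step, assume $\p_q^k h\in C^{1+\alpha}(\ov\0)$ solves \eqref{WH} with $k$ in place of $m$ for every $1\leq k\leq m-1$. Applying the same difference-quotient device to $\p_q^{m-1}h$ and invoking elliptic regularity for $(L,T)$ once more promotes $\p_q^m h$ into $C^{1+\alpha}(\ov\0)$. The explicit forms of $f_m,g_m$ and $\varphi_m^1,\dots,\varphi_m^5$ arise from a direct Leibniz expansion: one differentiates $m-1$ additional times in $q$ the interior expressions $h_q/h_p$ and $(1+h_q^2)/(2h_p^2)$ appearing in \eqref{PB1}, and $m$ times the Bernoulli boundary expression $(1+h_q^2+(2gh-Q)h_p^2)(1+h_q^2)^{3/2}/(2\sigma h_p^2)$ occurring in \eqref{NBC}; one then collects the linear-in-$\p_q^m h$ contributions on the left-hand side of \eqref{WH}, assigning everything else to the source terms. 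The commutation identity $\p_q(1-\p_q^2)^{-1}=(1-\p_q^2)^{-1}\p_q$ is what allows the smoothing operator to sit in front of each $\varphi_m^i$.

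The principal obstacle is the mixed local/nonlocal character of the boundary condition on $p=0$: standard Schauder estimates do not directly apply, and the regularity boost for $\delta_\tau\p_q^{m-1}h$ must instead be read off the Fredholm decomposition of $(L,T)$ established in the proof of Lemma \ref{L:2}. A secondary, essentially combinatorial difficulty is the bookkeeping needed to verify that the Leibniz expansion produces exactly the five boundary contributions listed in the statement; this is routine but requires careful tracking of which factors of $h_q$ and $h_p$ in the nonlinear boundary expression are differentiated, in order to isolate the linear part absorbed into the operator $T$ and to group the remainders into the five structurally distinct pieces $\varphi_m^i$.
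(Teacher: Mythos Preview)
Your overall architecture---induction on $m$, translation invariance, difference quotients, Leibniz expansion---matches the paper's. The gap is in the mechanism you propose for the uniform $C^{1+\alpha}$ bound on the difference quotient. You invoke ``Schauder-type estimates for $(L,T)$'' via the Fredholm decomposition of Lemma~\ref{L:2}, but that lemma treats the Fr\'echet derivative $\p_h\cF(\lambda,0)$ at the \emph{laminar} solution, with coefficients $H_p$ depending only on $p$. The linear problem solved by $\delta_\tau h$ has variable coefficients built from the nontrivial $h$ and its translate; Lemma~\ref{L:2} is not about that operator. More to the point, Fredholm index zero by itself yields no a~priori estimate: you would need injectivity of the linearization at $h$, which is neither asserted nor known, and you would need the estimate to be uniform in~$\tau$.

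The paper does something different, and your claim that ``standard Schauder estimates do not directly apply'' is in fact the misconception. The paper \emph{does} apply the standard Dirichlet Schauder estimate \cite[Theorem~8.33]{GT01}, treating the nonlocal boundary expression as prescribed Dirichlet data: this gives
\[
\|u_\e\|_{1+\alpha}\leq K_0\Bigl(\|u_\e\|_0+\bigl\|(1-\p_q^2)^{-1}\tr_0(b_1u_\e+b_2u_{\e,q}+b_3u_{\e,p})\bigr\|_{1+\alpha}\Bigr).
\]
The essential step---which your sketch omits---is then to exploit the smoothing $(1-\p_q^2)^{-1}:C^{\alpha/2}(\s)\to C^{2+\alpha/2}(\s)$ to bound the second term by a multiple of $\|\tr_0 u_\e\|_{1+\alpha/2}$, and to use the interpolation inequality $\|\tr_0 u\|_{1+\alpha/2}\leq \delta\|u\|_{1+\alpha}+C(\delta)\|u\|_0$ to absorb it into the left-hand side. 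This interpolation--absorption argument, not any Fredholm property, is what closes the estimate and delivers the uniform bound; it is also what is re-used in the inductive step and, later, in the proof of Theorem~\ref{MT2}.
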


We denote in this section by  $C_i,$ $i\in\N,$  universal constants which are independent of $m$ and the function $h$ considered in Proposition \ref{P:1}.
Moreover, we use $K_i$, $i\in\N,$ to denote also constants that are independent of $m$, but may depend on $\|\p_q^lh\|_{1+\alpha}$ with $0\leq l\leq 2.$

\begin{proof}[Proof of Proposition \ref{P:1}]
 The proof follows by using the induction principle.
 We will only show that $\p_qh $ belongs to $ C^{1+\alpha}(\ov\0)$ and that it solves the system \eqref{WH} when $m=1$.
 The general induction step follows by using a similar argument as in this first induction  step (see e.g. the proof of \cite[Proposition 2.1]{EM13x}).
 
 To begin, we observe that for all $\e\in(0,1),$ the horizontal translation $ h_\e\in C^{1+\alpha}(\ov\0)$ defined by $h_\e(q,p):=h(q+\e,p)$ for $(q,p)\in\ov\0$
 is also a weak solution of \eqref{PB'} and  \eqref{PBC}.
 Subtracting the relations satisfied by $h_\e$ from those satisfied by $h$, we find that the function $u_\e:=(h_\e-h)/\e$ belongs to  $C^{1+\alpha}(\ov\0)$ and it
 is a weak solution of the
equation 
\begin{equation}\label{WH3}
\left(a_{11}^\e\p_q u^\e\right)_q+\left(a_{12}^\e\p_p u^\e\right)_q
+\left(a_{21}^\e\p_q u^\e\right)_p+\left(a_{22}^\e\p_p u^\e\right)_p=0\qquad\text{in $\0$,}
\end{equation}
whereby 
\begin{align*}
  a_{11}^\e= \frac{1}{h_{\e,p}},\quad a_{12}^\e=-\frac{h_q}{h_ph_{\e,p}},
 \quad a_{21}^\e= -\frac{h_q+ h_{\e,q}}{ 2h_{\e, p}^2}, \quad a_{22}^\e=  \frac{(h_p+h_{\e,p})(1+h_q^2)}{2h^2_ph_{\e,p}^2}.
\end{align*}
Because of \eqref{PBC}, the equation \eqref{WH3} is uniformly elliptic when $\e\in(0,1) $ is sufficiently small.
Furthermore, $u_\e$ also satisfies the boundary conditions
\begin{equation}\label{WH4}
\left\{
\begin{array}{rllll}
u_\e&=&(1-\p_q^2)^{-1}\tr_0\left(b_1 u_\e+b_2u_{\e,q}+b_3u_{\e,p}\right)&\text{on} &p=0,\\
u^\e&=&0&\text{on}&p=p_0,
\end{array}
\right.
\end{equation}
with $b_i$ given by
\begin{align*}
 b_1:=&1-\frac{g(1+h_q^2)^{3/2}}{\sigma},\qquad b_3:=\frac{(h_p+h_{\e,p})(1+h_q^2)^{5/2}}{2\sigma h_p^2 H_p^2},\\
 b_2:=&-\frac{(h_q+h_{\e,q})\sum_{i=0}^4(1+h_q^2)^i(1+h_{\e,q}^2)^{4-i}}{2\sigma h_{\e,p}^2((1+h_q^2)^{5/2}+(1+h_{\e,q}^2)^{5/2})}\\
 &-\frac{(2gh_\e-Q)(h_q+h_{\e,q})\sum_{i=0}^2(1+h_q^2)^i(1+h_{\e,q}^2)^{2-i}}{2\sigma  ((1+h_q^2)^{3/2}+(1+h_{\e,q}^2)^{3/2})}.
\end{align*}
In view of \eqref{WH3} and \eqref{WH4}, we may use Schauder estimates for weak solutions of Dirichlet problems, cf. \cite[Theorem 8.33]{GT01} to conclude that there exists a positive constant $K_0,$ which is independent of $\e,$ such that
\begin{equation}\label{SE1}
 \|u_\e\|_{1+\alpha}\leq K_0\left(\|u_\e\|_0+\|(1-\p_q^2)^{-1}\tr_0\left(b_1 u_\e+b_2u_{\e,q}+b_3u_{\e,p}\right)\|_{1+\alpha}\right)
\end{equation}
for sufficiently small $\e.$
We now prove that $\|u_\e\|_{1+\alpha}$ may be bounded from above by a constant which is independent of $\e.$
Indeed, the mean value theorem implies that
\begin{equation}\label{SE2}
\|u_\e\|_0\leq \|h_q\|_0.
\end{equation}
On the other hand, taking into account that  $(1-\p_q^2)^{-1}\in\kL(C^{\alpha/2}(\s),C^{2+\alpha/2}(\s))$ and using the algebra property of $C^{\alpha/2}(\s),$ we get
\begin{align}
 &\|(1-\p_q^2)^{-1}\tr_0\left(b_1 u_\e+b_2u_{\e,q}+b_3u_{\e,p}\right)\|_{1+\alpha}\nonumber\\
 &\leq C_0\|(1-\p_q^2)^{-1}\tr_0\left(b_1 u_\e+b_2u_{\e,q}+b_3u_{\e,p}\right)\|_{2+\alpha/2}\nonumber\\
 &\leq C_0\|\tr_0\left(b_1 u_\e+b_2u_{\e,q}+b_3u_{\e,p}\right)\|_{\alpha/2}\nonumber\\
 &\leq K_1\|\tr_0 u_\e\|_{1+\alpha/2},\label{SE3}
\end{align}
with $C_0$ and $K_1$ independent of $\e.$
The well-known interpolation property of the H\"older spaces  
\[ (C(\s),C^{1+\alpha}(\s))_{\theta,\infty}=C^{1+\alpha/2}(\s)\qquad\text{if $\theta=\frac{2+\alpha}{2(1+\alpha)}$,}\]
 cf. e.g. \cite{L95},  implies, via Young's inequality, that
 \begin{equation}\label{E:Int}
  \|\tr_0u\|_{1+\alpha/2}\leq C_1\|\tr_0u\|_0^{1-\theta}\|\tr_0u\|_{1+\theta}^\theta\leq\delta\|u\|_{1+\alpha}+C(\delta)\|u\|_{0}
 \end{equation}
 for all $\delta>0 $ and all $u\in C^{1+\alpha}(\ov\0),$ the constant $C(\delta)$ being positive.
 Particularly, if we choose $\delta:=(2K_0K_1)^{-1},$ the relations \eqref{SE1}-\eqref{E:Int} yield that
 \begin{equation}\label{DE}
 \|u_\e\|_{1+\alpha}\leq 2K_0\|h_q\|_0(1+K_1C(\delta))
 \end{equation}
  for all sufficiently small   $\e$,  the right-hand side of \eqref{DE} being independent of $\e.$
  Since $u_\e$ converges pointwise to $h_q$, we find, by using \eqref{DE},  a subsequence of $(u_{\e_k})_k$  which  converges to $h_q$ in $C^1(\ov\0).$
 The uniform bound \eqref{DE} implies that in fact $h\in C^{1+\alpha}(\ov\0).$
 Finally, passing to the limit $k\to\infty$ in \eqref{WH3} and \eqref{WH4} we recover, in view of $h_q\in C^{1+\alpha}(\ov\0) $,
 the relations \eqref{WH} with $m=1.$
 \end{proof}

The following lemma will be one of the main tools when estimating the norm of the solution $\p_q^mh $ of \eqref{WH}.
 \begin{lemma}\label{L:A1}  Let $n_0, N_0\in\N $ satisfy $2\leq n_0\leq N_0$,  and assume that $\p_q^n u_i\in C^\alpha(\ov \0)$ for all $0\leq n\leq N_0$ and $1\leq i\leq 5.$
 If  there exists a constant $L\geq 1$  and a  real number $r\in[0,n_0]$ such that $\|\p_q^n u_i\|_\alpha\leq L^{n-r}(n-n_0)! $
for all $n_0\leq n\leq N_0,$
then we find a constant $C_0=C_0(n_0)>1$ with the property that
\begin{align}\label{E:a1}
& \|\p_q^n(u_1u_2)\|_\alpha\leq C_0\left(1+\sum_{i=1}^2\sum_{l=0}^{n_0-1}\|\p_q^l u_i\|_\alpha\right)^{2} L^{n-r}(n-n_0)!,\\
& \|\p_q^n(u_1u_2u_3)\|_\alpha\leq C_0\left(1+\sum_{i=1}^3\sum_{l=0}^{n_0-1}\|\p_q^l u_i\|_\alpha\right)^{6} L^{n-r}(n-n_0)!,\label{E:a2}\\
& \|\p_q^n(u_1u_2u_3u_4u_5)\|_\alpha\leq C_0\left(1+\sum_{i=1}^5\sum_{l=0}^{n_0-1}\|\p_q^l u_i\|_\alpha\right)^{14} L^{n-r}(n-n_0)!\label{E:a3}
\end{align}
for all $n_0\leq n\leq N_0.$
\end{lemma}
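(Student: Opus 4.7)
The plan is to combine the multinomial Leibniz rule with the Banach algebra property of $C^\alpha(\overline{\Omega})$, followed by a careful combinatorial analysis of multinomial coefficients. The three estimates can be treated uniformly with $N\in\{2,3,5\}$ factors, so I will describe the argument in that generality.

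Setting $M_i := 1 + \sum_{l=0}^{n_0-1}\|\p_q^l u_i\|_\alpha \ge 1$, the Banach algebra property produces a universal constant $c_\alpha$ with $\|fg\|_\alpha \le c_\alpha\|f\|_\alpha\|g\|_\alpha$; iterating and using the generalized Leibniz rule gives
\[
\|\p_q^n(u_1\cdots u_N)\|_\alpha \le c_\alpha^{N-1} \sum_{k_1+\cdots+k_N = n} \binom{n}{k_1,\ldots,k_N} \prod_{i=1}^N\|\p_q^{k_i}u_i\|_\alpha.
\]
For each multi-index $\mathbf{k}=(k_1,\ldots,k_N)$ set $S(\mathbf{k}) := \{i : k_i\ge n_0\}$. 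The assumed bound gives $\|\p_q^{k_i}u_i\|_\alpha \le L^{k_i-r}(k_i-n_0)!$ when $i\in S(\mathbf{k})$, and the definition of $M_i$ gives $\|\p_q^{k_i}u_i\|_\alpha \le M_i$ when $i\notin S(\mathbf{k})$. I then partition the Leibniz sum according to the value $S(\mathbf{k})=S\subseteq\{1,\ldots,N\}$.

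For each fixed $S$ with $s:=|S|\ge 1$ the core combinatorial identity
\[
\frac{\binom{n}{k_1,\ldots,k_N} \prod_{i\in S}(k_i-n_0)!}{(n-n_0)!} = \frac{n(n-1)\cdots(n-n_0+1)}{\prod_{i\notin S}k_i!\,\prod_{i\in S}k_i(k_i-1)\cdots(k_i-n_0+1)}
\]
drives the estimate. The constraint $\sum_{i\in S}k_i \ge n-(N-s)(n_0-1)$ forces at least one $k_i$ with $i\in S$ to be of order $n$ (for $n$ large), and since $k_i\ge n_0$ for $i\in S$ every factor in the right-hand product of the denominator is at least $n_0!$. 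A direct comparison then shows the ratio is bounded by $C(n_0,N)/\prod_{i\notin S}k_i!$ uniformly in $n$. Summing this against $\prod_{i\in S}L^{k_i-r}$ and $\prod_{i\notin S}M_i$, and absorbing $L^{-sr}$ into $L^{-r}$ using $L\ge 1$ and $r\ge 0$, yields a contribution bounded by $C(n_0,N)\prod_{i\notin S}M_i\cdot L^{n-r}(n-n_0)!$. The degenerate case $S=\emptyset$ occurs only for $n<Nn_0$ and contributes finitely many uniformly bounded terms, which can be absorbed using $L^{n-r}(n-n_0)!\ge 1$.

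Summing over all subsets $S\subseteq\{1,\ldots,N\}$ produces
\[
\sum_S\prod_{i\notin S}M_i = \prod_{i=1}^N(1+M_i) \le C(N)\tilde{M}^N,\qquad \tilde{M}:=1+\sum_{i=1}^N\sum_{l=0}^{n_0-1}\|\p_q^l u_i\|_\alpha,
\]
since $M_i\ge 1$. Because $\tilde{M}\ge 1$, the exponent $N$ may be freely enlarged to $2$, $6$, and $14$ for $N=2,3,5$, respectively, as required. The main obstacle lies in the combinatorial step: establishing uniform control (in $n$) of the ratio $\binom{n}{\mathbf{k}}\prod_{i\in S}(k_i-n_0)!/(n-n_0)!$ hinges on a delicate interplay between the large indices in $S$, whose factorials partially cancel through the identity above, and the small bounded indices outside $S$, which contribute summable factors of the form $(k_i!)^{-1}$.
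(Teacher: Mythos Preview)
Your multinomial route is different from the paper's, and it has a genuine gap at the step you yourself flag as ``the main obstacle''. You assert that for each fixed $\mathbf{k}$ with $S(\mathbf{k})=S$ the ratio $\binom{n}{\mathbf{k}}\prod_{i\in S}(k_i-n_0)!/(n-n_0)!$ is bounded by $C(n_0,N)/\prod_{i\notin S}k_i!$, and then ``sum this'' to conclude. The pointwise bound is correct, but it does not control the \emph{sum}: when $s=|S|\ge 2$ there are on the order of $n^{s-1}$ admissible multi-indices $(k_i)_{i\in S}$, so a uniform term-by-term bound of size $C$ yields only $O(n^{s-1})$. Concretely, take $N=2$, $S=\{1,2\}$: your per-term bound gives each summand $\le C$, but there are $n-2n_0+1$ of them. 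What is actually required is to exploit the decay of the ratio in the ``middle range'' where all $k_i$ with $i\in S$ are comparable to $n/s$; there the ratio behaves like $n^{n_0}/\prod_{i\in S}(k_i-n_0+1)^{n_0}$, and one needs the convergence of sums of the type $\sum_{j\ge 1} j^{-n_0}$ (using $n_0\ge 2$) together with a symmetry/splitting argument to get a bound independent of $n$. Your proposal gestures at this (``delicate interplay'') but does not supply it, and without it the argument is incomplete.

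The paper sidesteps the multinomial combinatorics altogether. It proves \eqref{E:a1} for two factors directly, by splitting the Leibniz sum $\sum_{k=0}^n\binom{n}{k}(\p_q^k u_1)(\p_q^{n-k}u_2)$ into edge ranges ($k<n_0$ or $k>n-n_0$) and a middle range ($n_0\le k\le n-n_0$), and bounding the middle via the elementary estimate
\[
\sum_{k=1}^{m}\frac{(m+1)^{n_0}}{(m+1-k)^{n_0}k^{n_0}}\le C(n_0),
\]
which holds precisely because $n_0\ge 2$. It then obtains \eqref{E:a2} and \eqref{E:a3} not by redoing the combinatorics for three or five factors, but by \emph{rescaling and iterating} the two-factor estimate: apply \eqref{E:a1} to $v_1=u_1u_2/[C_0(\cdots)^2]$ and $v_2=u_3$ to get \eqref{E:a2}, and then to $w_1=u_1u_2u_3/[C_0(\cdots)^6]$ and $w_2=u_4u_5/[C_0(\cdots)^2]$ to get \eqref{E:a3}. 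This bootstrap is the origin of the exponents $2,6,14$, and it reduces the entire lemma to the single summability estimate above. Your direct approach can be completed, but doing so requires carrying out essentially the same middle-range argument for general $s\le 5$, which is more work than the paper's two-factor case plus iteration.
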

\begin{rem}\label{R:C0} We will use the assertions of the  Lemma \ref{L:A1} several times in this paper with $n_0\in\{2,3,5\}.$
Since the constant $C_0$ depends only on $n_0$,  it is useful to define
\begin{equation}\label{C0}
 C_0:=\max\{C_0(2), C_0(3), C_0(5)\}.
\end{equation}
It is   important to stress that the quantities on the right-hand side of \eqref{E:a1}-\eqref{E:a3} contain only derivatives of $u_i$ which have order less than $n_0.$
\end{rem}

\begin{proof}[Proof of Lemma \ref{L:A1}]
Leibniz's rule implies that for all $ n_0\leq n\leq N_0$ we have
\begin{equation}\label{MS}
 \p_q^n(u_1u_2)=\sum_{k=0}^{n} \begin{pmatrix}n\\k\end{pmatrix}(\p_q^{k}u_1)\p_q^{n-k}u_2.
 \end{equation}
Taking into account that $\|u_1u_2\|_\alpha\leq \|u_1\|_\alpha\|u_2\|_\alpha$, we find  for all $n_0\leq n\leq \min\{2n_0-1,N_0\}$ that
 \begin{align}
  \left\|\p_q^n(u_1u_2)\right\|_\alpha
  \leq &n_0!\left(\sum_{k=0}^{n-n_0}+\sum_{k= n-n_0+1}^{n_0-1} +\sum_{k=n_0 }^n \right)\|\p_q^{k}u_1\|_\alpha\|\p_q^{n-k}u_2\|_\alpha\nonumber\\
    \leq &n_0! \left(\sum_{k=0}^{n-n_0}\|\p_q^{k}u_1\|_\alpha L^{n-k-r}(n-k-n_0)! \right)+n_0! \left(\sum_{k=0}^{n_0-1}\|\p_q^{k}u_1\|_\alpha \right)^2\nonumber
 \end{align}
 \begin{align}
       &+n_0!\sum_{k=n_0}^n L^{k-r}(k-n_0)!\|\p_q^{n-k}u_2\|_\alpha \nonumber\\
       \leq &n_0!\left(1+\sum_{k=0}^{n_0-1}\|\p_q^{k}u_1\|_\alpha \right)^2L^{n-r}(n-n_0)!.\label{sum1}
 \end{align}
On the other hand, if $N_0\geq 2n_0 $ and  $ 2n_0\leq n\leq N_0,$ then we split the sum \eqref{MS} as follows
\begin{align}\label{SMMM}
 \p_q^n(u_1u_2)=\left(\sum_{k=0}^{ n_0-1 }+\sum_{k= n_0 }^{  n-n_0 }+\sum_{k=  n-n_0+1}^{n}\right) \begin{pmatrix}n\\k\end{pmatrix}(\p_q^{k}u_1)\p_q^{n-k}u_2,
 \end{align}
 and obtain from the hypothesis that
 \begin{align}
 \left\| \sum_{k=0}^{ n_0-1 } \begin{pmatrix}n\\k\end{pmatrix}(\p_q^{k}u_1)\p_q^{n-k}u_2\right\|_\alpha
 \leq&  \sum_{k=0}^{ n_0-1 } \begin{pmatrix}n\\k\end{pmatrix}L^{n-k-r}(n-k-n_0)!\|\p_q^{k}u_1\|_\alpha\nonumber\\
 \leq & L^{n-r}(n-n_0)!\left(\sum_{k=0}^{n_0-1}\|\p_q^{k}u_1\|_\alpha \right)  \frac{n^{n_0}}{(n-2n_0+2)^{n_0}} \nonumber\\
 \leq &C_1\left(\sum_{k=0}^{n_0-1}\|\p_q^{k}u_1\|_\alpha \right)L^{n-r}(n-n_0)!.\label{sum2}
 \end{align}
 The third sum of \eqref{SMMM} can be estimated by the same expression \eqref{sum2}.
 Finally, the second term of \eqref{SMMM} is estimated as follows
  \begin{align}
 \left\| \sum_{k= n_0 }^{  n-n_0 } \begin{pmatrix}n\\k\end{pmatrix}(\p_q^{k}u_1)\p_q^{n-k}u_2\right\|_\alpha\leq&  \sum_{k= n_0 }^{  n-n_0 } \begin{pmatrix}n\\k\end{pmatrix}L^{k-r}(k-n_0)!L^{n-k-r}(n-k-n_0)!\nonumber\\
 \leq&  L^{n-r}(n-n_0)!\sum_{k= n_0 }^{  n-n_0 } \frac{n^{n_0}}{(n-k-n_0+1)^{n_0}(k-n_0+1)^{n_0}}\nonumber\\
 \leq & L^{n-r}(n-n_0)! \sum_{k= 1}^{  n-2n_0+1 } \frac{n^{n_0}}{(n-2n_0+2-k)^{n_0}k^{n_0}} \nonumber\\
 \leq &C_2 L^{n-r}(n-n_0)!,\label{sum3}
 \end{align}
 since, using the inequality $n_0(n-2n_0+2)\geq n$ for $n\geq 2n_0,$ we have
 \begin{align}\label{arg}
  \sum_{k= 1}^{n-2n_0+1 } \frac{n^{n_0}}{(n-2n_0+2-k)^{n_0}k^{n_0}}\leq n_0^{n_0}\sum_{k= 1}^{n-2n_0+1 } \frac{(n-2n_0+2)^{n_0}}{(n-2n_0+2-k)^{n_0}k^{n_0}}\leq2(2n_0)^{n_0}\sum_{k=1}^\infty \frac{1}{k^{n_0}},
 \end{align}
 the last series being finite as $n_0\geq2.$
 Gathering \eqref{sum1}, \eqref{sum2}, and \eqref{sum3}, we have established \eqref{E:a1}.
We  next apply the estimate \eqref{E:a1} to the functions
\[
v_1:=\frac{u_1u_2}{C_0\left(1+\sum_{i=1}^2\sum_{l=0}^{n_0-1}\|\p_q^l u_i\|_\alpha\right)^{2}}\qquad\text{and}\qquad v_2:=u_3
\]
 and obtain \eqref{E:a2}.
The last claim \eqref{E:a3} follows by applying \eqref{E:a1} to the functions
\[
w_1:=\frac{u_1u_2u_3 }{C_0\left(1+\sum_{i=1}^3\sum_{l=0}^{n_0-1}\|\p_q^l u_i\|_\alpha\right)^{6}}\qquad\text{and}\qquad w_2:=\frac{u_4u_5}{C_0\left(1+\sum_{i=4}^5\sum_{l=0}^{n_0-1}\|\p_q^l u_i\|_\alpha\right)^{2}}.
\]
\end{proof}

Because the functions $f_m, g_m,$ and $\varphi_m^i$ contain derivatives of $1/h_p$, when estimating their norms  we make use of the following result.
 \begin{lemma}\label{L:A2}  Assume that $\p_q^n u\in C^\alpha(\ov \0)$ for all $0\leq n\leq N$, with $N\geq 3,$ and let $C_0$ be the constant defined by \eqref{C0}.
If  there exists a constant  $L$ with
\begin{equation}\label{E:CC}
L\geq \|\p_q^2(1/u)\|_\alpha^{2}+\|\p_q^3(1/u)\|_\alpha^{2/3}+C_0^2\left(1+\sum_{l=0}^1(2\|\p_q^l (1/u)\|_\alpha +\|\p_q^{1+l}u\|_\alpha)\right)^{12}
\end{equation}
and $\|\p_q^n u\|_\alpha\leq L^{n-2}(n-3)!$
for all $3\leq n\leq N$, and if  $\inf_{\0} u>0$,
then we have
\begin{equation}\label{E:4}
 \|\p_q^n(1/u)\|_\alpha\leq L^{n-3/2}(n-3)!\qquad\text{for all $3\leq n\leq N.$}
\end{equation}
\end{lemma}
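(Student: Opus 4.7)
The plan is to proceed by strong induction on $n\in\{3,\ldots,N\}$, exploiting the identity $u\cdot(1/u)=1$: differentiating $n\geq 1$ times via Leibniz's rule yields
\[
0=\p_q^n(u\cdot(1/u))=u\,\p_q^n(1/u)+\sum_{k=1}^n\binom{n}{k}(\p_q^k u)\,\p_q^{n-k}(1/u),
\]
which, using $\inf_{\0}u>0$, can be solved as
\[
\p_q^n(1/u)=-\frac{1}{u}\sum_{k=1}^n\binom{n}{k}(\p_q^k u)\,\p_q^{n-k}(1/u).
\]
The base case $n=3$ is immediate from \eqref{E:CC}, which gives $L\geq\|\p_q^3(1/u)\|_\alpha^{2/3}$ and hence $\|\p_q^3(1/u)\|_\alpha\leq L^{3/2}=L^{3-3/2}(3-3)!$.

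For the inductive step with $n\geq 4$, I assume \eqref{E:4} for $3\leq m\leq n-1$ and mimic the splitting strategy from the proof of Lemma \ref{L:A1}. The Leibniz sum is divided into three ranges according to the size of $k$: a low range $k\in\{1,2\}$, where $\p_q^k u$ is bounded only by $\|\p_q^k u\|_\alpha$ (absorbed into the constant appearing in \eqref{E:CC}) while $\p_q^{n-k}(1/u)$ uses the inductive hypothesis; a middle range $3\leq k\leq n-3$, where both factors satisfy the factorial-geometric bounds, yielding terms of size $\binom{n}{k}L^{n-7/2}(k-3)!(n-k-3)!$; and a high range $k\in\{n-2,n-1,n\}$, where $\p_q^k u$ uses the hypothesis and $\p_q^{n-k}(1/u)$ with $n-k\in\{0,1,2\}$ is absorbed into the constant. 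The middle sum is controlled by $L^{n-7/2}(n-3)!\sum_{k}n^3/[k(n-k)]^3$, a convergent series treated exactly as in \eqref{arg}; the low and high sums contribute $O(L^{n-5/2}(n-3)!)$ and $O(L^{n-2}(n-3)!)$, respectively.

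Multiplying by the factor $1/u$ out front (controlled by $\|1/u\|_\alpha\leq 1+\|\p_q^0(1/u)\|_\alpha$, which is part of the bracket in \eqref{E:CC}) and assembling the three estimates, one arrives at a bound
\[
\|\p_q^n(1/u)\|_\alpha\leq K\,L^{n-2}(n-3)!,
\]
where $K$ is a universal constant built from $C_0$ and the quantities $\|\p_q^l(1/u)\|_\alpha$, $\|\p_q^{1+l}u\|_\alpha$ for $l\in\{0,1\}$. The specific form of \eqref{E:CC}, in particular the exponent $12$ and the squared $C_0$, is tailored so that $K\leq L^{1/2}$, which upgrades this bound to the desired $L^{n-3/2}(n-3)!$.

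The main obstacle is the careful bookkeeping: the statement asks for the rate $L^{n-3/2}$, one half-power worse than the rate $L^{n-2}$ assumed for $u$, and this discrepancy must be absorbed precisely once, at each induction step, by the prefactor $1/u$ and the bracketed constants in \eqref{E:CC}. Ensuring that no extra factor of $L^{1/2}$ is spent in the combinatorial bounds on the middle sum, and that the exponent $12$ in \eqref{E:CC} really dominates the worst accumulation arising from the low- and high-range products, is the delicate part; the rest is the same sum-splitting technique already carried out in \eqref{sum1}--\eqref{arg}.
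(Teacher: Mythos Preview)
Your approach is correct in outline but differs from the paper's. Rather than expanding $u\cdot(1/u)=1$ by Leibniz and redoing the three-range splitting, the paper observes that
\[
\p_q^m(1/u)=\p_q^{m-1}\big(u_1u_2u_3\big),\qquad u_1:=-\p_q u,\quad u_2=u_3:=1/u,
\]
and then invokes the triple-product estimate \eqref{E:a2} of Lemma~\ref{L:A1} with $n_0=2$, $r=1$, $N_0=m-1$ as a black box. The verification that $\|\p_q^n u_i\|_\alpha\leq L^{n-1}(n-2)!$ for $2\leq n\leq m-1$ uses exactly the three ingredients in \eqref{E:CC} (the bounds on $\|\p_q^2(1/u)\|_\alpha$, $\|\p_q^3(1/u)\|_\alpha$, and the bracket). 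The output of \eqref{E:a2} is then precisely $C_0(1+\sum\ldots)^6 L^{m-2}(m-3)!$, and \eqref{E:CC} is written so that this prefactor is $\leq L^{1/2}$ on the nose.

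What each route buys: the paper's reduction avoids repeating the combinatorics of \eqref{sum1}--\eqref{arg}, which are already packaged in Lemma~\ref{L:A1}, and it makes transparent why the exponent $12$ and the factor $C_0^2$ appear in \eqref{E:CC} (they are the square of the constant produced by \eqref{E:a2}). Your Leibniz route is more self-contained but produces a different aggregate constant $K$; while it is plausible that \eqref{E:CC} still gives $K\leq L^{1/2}$ (since the bracket raised to the sixth power dominates any low-degree polynomial in the same quantities), this is not the identity \eqref{E:CC} was designed for, and your sentence ``the specific form of \eqref{E:CC}\ldots is tailored so that $K\leq L^{1/2}$'' would need an explicit check rather than an appeal to design.
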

\begin{proof}
 In view of \eqref{E:CC}, it is clear that \eqref{E:4} is satisfied when $n=3.$
 So, let us assume that $N\geq 4$ and that \eqref{E:4} is satisfied for all $3\leq n\leq m-1$, whereby $m\leq N $ is arbitrarily chosen.
 We only need  to prove that \eqref{E:CC} holds for  $m$ too.
 Therefore, we   write
\[
 \p_q^m(1/u)=\p_q^{m-1}(u_1u_2u_3),
 \]
 whereby $u_1:=-\p_qu$ and $u_2=u_3:=1/u.$
 Our hypothesis, the induction assumption, the relation \eqref{E:CC}, and the fact that $L>1$, yield that 
 \begin{align*}
 &\|\p_q^n u_1\|_\alpha=\|\p_q^{n+1}u \|_\alpha\leq L^{n-1}(n-2)!,\\
 &\|\p_q^nu_2\|_\alpha=\|\p_q^{n}(1/u)\|_\alpha\leq L^{n-3/2}(n-2)!\leq L^{n-1}(n-2)!
 \end{align*}
 for all $2\leq n\leq m-1.$
Therefore, we may use the estimate \eqref{E:a2} of Lemma  \ref{L:1} (with $r=1$, $n_0=2$, and $N_0=m-1$)   to obtain, in view of \eqref{E:CC}, that
 \begin{align*}
  \|\p_q^m(1/u)\|_\alpha&=\|\p_q^{m-1}(u_1u_2u_3)\|_\alpha\leq C_0\left(1+\sum_{l=0}^1(2\|\p_q^l (1/u)\|_\alpha +\|\p_q^{1+l}u\|_\alpha)\right)^{6}L^{m-2}(m-3)!\\
  &\leq L^{m-3/2}(m-3)!,
 \end{align*}
which is the desired estimate.
\end{proof}

 In the proof of Theorem \ref{MT2},  we also need to estimate, when considering the boundary terms $ \varphi_m^i,$ expressions containing derivatives of   $(1+h_q^2)^{1/2}$.
 To this end, we need, additionally to the Lemma \ref{L:A1} and \ref{L:A2}, the following result.
  
 \begin{lemma}\label{L:A3}  Assume that $\p_q^n u\in C^\alpha(\ov \0)$ for all $0\leq n\leq N$, with $N\geq 3,$ and let $C_0$ be the constant given by \eqref{C0}.
If  there exists a constant  $L$ satisfying
\begin{align}\label{E:CCC}
L\geq& \|\p_q^3((1+u^2)^{-1/2}) \|_\alpha^{4/5}+\|\p_q^2((1+u^2)^{-1/2}) \|_\alpha^{4}+\|\p_q^2u\|_\alpha+C_0^4\left(1+2\sum_{l=0}^2\|\p_q^l u \|_\alpha\right)^8\nonumber\\
&+C_0^4\left(1+3\sum_{l=0}^1\|\p_q^l ((1+u^2)^{-1/2})\|_\alpha+\sum_{l=0}^1\|\p_q^l u\|_\alpha+\sum_{l=0}^1\|\p_q^{l+1} u\|_\alpha\right)^{56}\nonumber\\
&+C_0^4\left(1+ \sum_{l=0}^2\|\p_q^l (1+u^2)\|_\alpha+ \sum_{l=0}^2\|\p_q^l( (1+u^2)^{-1/2})\|_\alpha\right)^{8},
\end{align}
such that $\|\p_q^n u\|_\alpha\leq L^{n-2}(n-3)!$
for all $3\leq n\leq N$,
then we have
\begin{equation}\label{E:5}
 \|\p_q^n((1+u^2)^{1/2})\|_\alpha\leq L^{n-3/2}(n-3)!\qquad\text{for all $3\leq n\leq N.$}
\end{equation}
\end{lemma}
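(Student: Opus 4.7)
The plan is to adapt the strategy of Lemma \ref{L:A2}, replacing $1/u$ by the pair of mutually inverse functions $v:=(1+u^2)^{1/2}$ and $w:=(1+u^2)^{-1/2}$. Differentiation yields the identities
\[
\p_q v=u\,w\,\p_q u,\qquad \p_q w=-u\,w^3\,\p_q u,
\]
so that for every $n\ge 1$
\[
\p_q^{n}v=\p_q^{n-1}\bigl(u\cdot w\cdot\p_q u\bigr),\qquad \p_q^{n}w=-\p_q^{n-1}\bigl(u\cdot w\cdot w\cdot w\cdot\p_q u\bigr).
\]
Crucially, the right-hand side for $\p_q^n w$ involves no factor of $v$, so the induction for $w$ may be closed independently of $v$; the three-factor expression for $v$ is designed to invoke \eqref{E:a2}, while the five-factor expression for $w$ is designed to invoke \eqref{E:a3}.

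First I would prove the auxiliary bound $\|\p_q^n w\|_\alpha\le L^{n-3/2}(n-3)!$ for $3\le n\le N$ by strong induction on $n$. The base case is immediate from $\|\p_q^3 w\|_\alpha^{4/5}\le L$, which yields $\|\p_q^3 w\|_\alpha\le L^{5/4}\le L^{3/2}$. For the inductive step I would apply \eqref{E:a3} with $n_0=2$ and $r=1$ to the five-factor product; the required estimates $\|\p_q^k u_i\|_\alpha\le L^{k-1}(k-2)!$ for $2\le k\le n-1$ on each factor hold because $\|\p_q^2 u\|_\alpha\le L$ and $\|\p_q^2 w\|_\alpha\le L^{1/4}$ are given by \eqref{E:CCC}, while the higher-order cases follow from the main hypothesis and from the induction hypothesis. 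The resulting bound has the shape $C_0(1+S)^{14}L^{n-2}(n-3)!$ with $S$ precisely the low-order sum appearing inside the $(\cdots)^{56}$ term of \eqref{E:CCC}, and that term furnishes the factor $L^{1/2}$ which closes the induction.

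The bound for $v$ is then obtained by the same induction scheme, with \eqref{E:a2} in place of \eqref{E:a3}; the remaining $C_0^4(\cdots)^{8}$ summands of \eqref{E:CCC} absorb the constant $C_0(1+S')^{6}$ produced there and also control the base case $n=3$, where one expands $\p_q^3 v=\p_q^2(u\,w\,\p_q u)$ directly via Leibniz's formula and estimates each of the nine resulting terms by low-order norms of $u$ and $w$. The principal obstacle is not analytical but combinatorial: one must carefully verify that the low-order sums $S,S'$ arising in each application of Lemma \ref{L:A1} fit exactly under one of the explicit summands of \eqref{E:CCC}, and that no hidden dependency on $\p_q^n v$ contaminates the induction for $w$. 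The exponents $56$ and $8$ in \eqref{E:CCC} are dictated precisely by this matching --- by \eqref{E:a3} together with the $L^{1/2}$ buffer in the former case, and by \eqref{E:a1} applied to ancillary products such as $\p_q^k(1+u^2)$ and to the Leibniz expansion of the base case in the latter.
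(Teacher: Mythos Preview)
Your strategy for the auxiliary bound on $w=(1+u^2)^{-1/2}$ is exactly that of the paper: write $\p_q w=-u\,w^3\,u_q$, induct, and invoke \eqref{E:a3} with $n_0=2$, $r=1$. The paper extracts the sharper exponent $L^{n-7/4}$ from the same $(\cdots)^{56}$ term of \eqref{E:CCC}, but your weaker $L^{n-3/2}$ is also valid.

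Where you diverge is in the passage from $w$ to $v=(1+u^2)^{1/2}$. The paper does \emph{not} use the three-factor identity $\p_q v=u\,w\,u_q$; instead it writes $v=(1+u^2)\cdot w$ and applies \eqref{E:a1} twice with $n_0=3$: once to bound $\|\p_q^n(1+u^2)\|_\alpha$ by $L^{n-7/4}(n-3)!$ (this is where the first $(\cdots)^8$ term in \eqref{E:CCC} is spent), and once to the product $(1+u^2)\cdot w$ at level $r=7/4$ (this is where the second $(\cdots)^8$ term is spent, leaving exactly $L^{1/4}$ of room). This explains why the paper needs the sharper $7/4$ exponent on $w$: with only $L^{n-3/2}$ there would be no slack to absorb the final constant.

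Your route through \eqref{E:a2} is also correct, and arguably more economical since no separate induction on $v$ is required (the right-hand side involves only $u$, $u_q$, $w$, all already controlled), so a single application of \eqref{E:a2} for every $3\le n\le N$ suffices. However, your accounting is off: the constant $C_0(1+S')^6$ you produce, with $S'=\sum_{l=0}^1\bigl(\|\p_q^l u\|_\alpha+\|\p_q^l w\|_\alpha+\|\p_q^{l+1}u\|_\alpha\bigr)$, is \emph{not} dominated by either of the $(\cdots)^8$ summands in \eqref{E:CCC} (neither contains the right combination of low-order norms of $u$, $u_q$ and $w$). What actually closes your argument is the very same $(\cdots)^{56}$ summand you used for $w$, since its base majorises $1+S'$ and $C_0^4(1+S')^{56}\ge C_0^2(1+S')^{12}$. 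So in your approach the two $(\cdots)^8$ terms of \eqref{E:CCC} are in fact superfluous, contrary to your final sentence; they are there because of the paper's factorisation, not yours.
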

 \begin{proof}
  We first prove  that 
  \begin{equation}\label{E:6}
 \|\p_q^n((1+u^2)^{-1/2})\|_\alpha\leq L^{n-7/4}(n-3)!\qquad\text{for all $3\leq n\leq N.$}
\end{equation}
With our choice of $L$, it is clear that \eqref{E:6} is satisfied when $n=3.$
Let us now presuppose that $N\geq 4$ and that \eqref{E:6} is true for all $3\leq n\leq m-1$, whereby $m$ satisfies $4\leq m\leq N.$
It suffices to prove that \eqref{E:6}   holds true for $n=m.$ 
To this end, we observe that
\[
\p_q^m((1+u^2)^{-1/2})=-\p_q^{m-1}\left( ((1+u^2)^{-1/2})^3u u_q\right),
\]
and that \eqref{E:CCC} together with the induction assumption imply
\begin{align*}
 &\|\p_q^n((1+u^2)^{-1/2})\|_\alpha \leq  L^{n-7/4}(n-2)!\leq L^{n-1}(n-2)!,\\
 &\|\p_q^nu\|_\alpha  \leq L^{n-1}(n-2)!,\\
  &\|\p_q^nu_q\|_\alpha \leq\|\p_q^{n+1}u\|_\alpha\leq  L^{n-1}(n-2)!
\end{align*}
for all $2\leq n\leq m-1.$
These inequalities allow us to use the estimate \eqref{E:a3} of Lemma \ref{L:A1} (with $r=1$, $n_0=2,$ and $N_0=m-1$) in order to obtain   that
\begin{align*}
\|\p_q^m((1+u^2)^{-1/2})\|_\alpha=&\|\p_q^{m-1}( ((1+u^2)^{-1/2})^3u u_q)\|_\alpha\\
\leq& C_0\left(1+3\sum_{l=0}^1\|\p_q^l ((1+u^2)^{-1/2})\|_\alpha+\sum_{l=0}^1\|\p_q^l u\|_\alpha+\sum_{l=0}^1\|\p_q^l u_q\|_\alpha\right)^{14}\\
&\times L^{m-2}(m-3)!\\
\leq& L^{m-7/4}(m-3)!,
\end{align*}
when $L$ satisfies \eqref{E:CCC}. This proves \eqref{E:6}.

In order to prove \eqref{E:5}, we observe that our hypothesis together with \eqref{E:CCC} and the estimate \eqref{E:a1} of the Lemma \ref{L:A1} (with $r=2$, $n_0=3$, and $N_0=N$) imply  that
\begin{align*}
\|\p_q^n(1+u^2)\|_\alpha=&\|\p_q^n u^2 \|_\alpha\leq C_0\left(1+2\sum_{l=0}^2\|\p_q^l u \|_\alpha\right)^2 L^{n-2}(n-3)!\leq L^{n-7/4}(n-3)!
\end{align*}
for all $3\leq n\leq N$.
Whence, invoking \eqref{E:6} and the estimate \eqref{E:a1} of Lemma \ref{L:A1} (with $r=7/4$, $n_0=3$, and $N_0=N$), we deduce that
\begin{align*}
 \|\p_q^n((1+u^2)^{1/2})\|_\alpha=& \|\p_q^n((1+u^2)(1+u^2)^{-1/2})\|_\alpha\\
 \leq& C_0\left(1+ \sum_{l=0}^2\|\p_q^l (1+u^2)\|_\alpha+ \sum_{l=0}^2\|\p_q^l (1+u^2)^{-1/2}\|_\alpha\right)^2 L^{n-7/4}(n-3)!\\
 \leq& L^{n-3/2}(n-3)!
\end{align*}
for all $3\leq n\leq N,$ the  last inequality being a consequence of   our choice for $L$. 
This is the desired claim.
 \end{proof}
 Finally, we come to the proof of our second main result stated in Theorem \ref{MT2}.
 \begin{proof}[Proof of Theorem \ref{MT2}]
The proof uses the induction principle.
To this end, let $C_0$ be the constant defined by \eqref{C0}.
We  first pick a positive constant $L$ which satisfies 
\begin{align}
L\geq &\|\p_q^2(1/h_p)\|_\alpha^{2}+\|\p_q^3(1/h_p)\|_\alpha^{2/3}+C_0^2\left(1+\sum_{l=0}^1(2\|\p_q^l (1/h_p)\|_\alpha +\|\p_q^{1+l}h_p\|_\alpha)\right)^{12}\nonumber\\
&+\|\p_q^3((1+h_q^2)^{-1/2}) \|_\alpha^{4/5}+\|\p_q^2((1+h_q^2)^{-1/2}) \|_\alpha^4+\|\p_q^2h_q\|_\alpha+C_0^4\left(1+2\sum_{l=0}^2\|\p_q^l h_q \|_\alpha\right)^8\nonumber\\
&+C_0^4\left(1+3\sum_{l=0}^1\|\p_q^l ((1+h_q^2)^{-1/2})\|_\alpha+\sum_{l=0}^1\|\p_q^l h_q\|_\alpha+\sum_{l=0}^1\|\p_q^{l+1} h_q\|_\alpha\right)^{56}\nonumber\\
&+C_0^4\left(1+ \sum_{l=0}^2\|\p_q^l (1+h_q^2)\|_\alpha+ \sum_{l=0}^2\|\p_q^l ((1+h_q^2)^{-1/2})\|_\alpha\right)^{8} +\|\p_q^2 h_q^2 \|_\alpha^4+\sum_{l=0}^6\|\p_q^l h\|_{1+\alpha},\label{E:C}
\end{align}
and observe that  $L\geq1.$ 
Moreover, this choice ensures that the estimate \eqref{E} is satisfied, for this fixed $L$, when  $3\leq m\leq 6.$ 
We  next  assume that \eqref{E} is true for all $3\leq n\leq m-1$ whereby $m\geq7,$ and are left  to prove, possibly under some additional constraints on $L$ (see \eqref{E:constr}), that \eqref{E} holds also for $m.$
Recalling the Proposition \ref{P:1}, we know that $\p_q^mh$ is the solution of the elliptic boundary value problem \eqref{WH}.
Proceeding as in the proof of Proposition \ref{P:1}, the Schauder estimate \cite[Theorem 8.33]{GT01} together with the  inequality \eqref{E:Int} show that   the solution $\p_q^mh$ of \eqref{WH} can be estimated as follows 
 \begin{equation}\label{IND}
  \|\p_q^mh\|_{1+\alpha}\leq K_2\left(\|\p_q^mh\|_0+\|f_m\|_\alpha+\|g_m\|_\alpha+\sum_{i=1}^5\|\varphi_m^i\|_{1+\alpha}\right).
 \end{equation}
 
Whence, we are left to prove that the right-hand side of \eqref{IND} can be bounded from above by $L^{m-2}(m-3)!.$
To establish this property, we notice that the induction assumption implies that
\begin{align}\label{S1}
 \max\{\|\p_q^nh_q\|_\alpha,\|\p_q^nh_p\|_\alpha\}\leq \|\p_q^nh\|_{1+\alpha}\leq L^{n-2}(n-3)! \qquad\text{for $3\leq n\leq m-1.$}
\end{align}
It is now immediate to observe, due to \eqref{E:C}, that $L\geq\max\{ \|\p_q^2h_q\|_\alpha^2,\|\p_q^2h_p\|_\alpha^2\}$, so that we also have
\begin{align}\label{S3}
 \max\{\|\p_q^nh_q\|_\alpha, \|\p_q^nh_p\|_\alpha\}\leq L^{n-3/2}(n-2)! \qquad\text{for $2\leq n\leq m-1.$}
\end{align}
On the other hand, the relation satisfied by $L$, the estimate \eqref{S1}, and the assumption \eqref{PBC} on $h$, guarantee   that the function $u:=h_p$ satisfies the assumptions of the Lemma \ref{L:A2} (with $N=m-1$).
Therefore, we conclude that
 \begin{align}\label{S2}
 \|\p_q^n(1/h_p)\|_\alpha\leq  L^{n-3/2}(n-3)! \qquad\text{for all $3\leq n\leq m-1,$}
\end{align}
which implies, in view of $L\geq \|\p_q^2(1/h_p)\|_\alpha^2$ that
  \begin{align}\label{S2'}
 \|\p_q^n(1/h_p)\|_\alpha\leq  L^{n-3/2}(n-2)! \qquad\text{for all $2\leq n\leq m-1.$}
\end{align}

With these preparations, we start and estimate the right-hand side of \eqref{IND}.
We begin by observing that
\begin{align}\label{F1}
\|\p_q^m h\|_0\leq\|\p_q^{m-1} h\|_\alpha\leq L^{m-3}(m-4)!.
\end{align}

Next we consider the expressions $\|f_m\|_\alpha$ and $\|g_m\|_\alpha.$
The arguments used for bounding these quantities are quite similar, and we will present them in detail only when estimating a representative term of $f_m.$
Indeed, recalling  \eqref{S3},  \eqref{S2'}, and the estimates \eqref{E:a1}-\eqref{E:a3} of  Lemma \ref{L:A1} (with $r=3/2,$ $n_0=2$, and $N_0=m-1$), we conclude that  there exists a constant $K_3>1$ such that
\begin{align}\label{G:1}
 \max\left\{\left\|\p_q^n\left(\frac{1}{h_p}\right)\right\|_\alpha,
\left\| \p_q^n\left(\frac{h_q}{h_p^2}\right)\right\|_\alpha, \left\|\p_q^n\left(\frac{1+h^2_q}{h_p^3}\right)\right\|_\alpha\right\}\leq K_3L^{n-3/2}(n-2)!
\end{align}
for all $2\leq n\leq m-1.$
With this observation at hand, it is not difficult to see that all the terms defining $f_m$ and $g_m$ can
 be estimated by using the same arguments as when dealing with the following representative term of $f_m$
\[
 \sum_{k=1}^{m-1}\begin{pmatrix}m-1\\k\end{pmatrix}\p_q^k\left(\frac{1}{h_p}\right)\p_q(\p_q^{m-k}h)=\left(\sum_{k=1}^1+\sum_{k=2}^{m-3}+\sum_{k=m-2}^{m-1}\right)\begin{pmatrix}m-1\\k\end{pmatrix}\p_q^k\left(\frac{1}{h_p}\right)\p_q(\p_q^{m-k}h).
\]
In view of \eqref{G:1} and of the induction assumption, we get  
\begin{align*}
 \left\|\left(\sum_{k=1}^1 +\sum_{k=m-2}^{m-1}\right)\begin{pmatrix}m-1\\k\end{pmatrix}\p_q^k\left(\frac{1}{h_p}\right)\p_q(\p_q^{m-k}h)\right\|_\alpha\leq K_4L^{m-5/2}(m-3)!.
\end{align*}
On the other hand, using additionally the relations \eqref{arg} and \eqref{S1}, we have
\begin{align*}
 \left\|\sum_{k= 2}^{m-3} \begin{pmatrix}m-1\\k\end{pmatrix}\p_q^k\left(\frac{1}{h_p}\right)\p_q(\p_q^{m-k}h)\right\|_\alpha
 \leq& \sum_{k= 2}^{m-3} \begin{pmatrix}m-1\\k\end{pmatrix}\left\|\p_q^k\frac{1}{h_p}\right\|_\alpha\left\|\p_q^{m-k}h\right\|_{1+\alpha}\\
 \leq &K_5L^{m-7/2}\sum_{k= 2}^{m-3}  \begin{pmatrix}m-1\\k\end{pmatrix}(k-2)!(m-k-3)!\\
  \leq &K_5L^{m-7/2}(m-3)!\sum_{k= 2}^{m-3}  \frac{(m-1)^2}{(m-k-2)^2(k-1)^2}\\
  \leq &K_5L^{m-7/2}(m-3)!.
\end{align*}
Proceeding in the same way with the remaining terms of $f_m$ and $g_m$, we end up with
\begin{align}\label{F2}
 \|f_m\|_\alpha+\|g_m\|_\alpha\leq K_6L^{m-5/2}(m-3)!.
\end{align}

We are left to estimate  the terms $\|\varphi_m^i\|_{1+\alpha} $, $1\leq i\leq 5$.
To this end, we observe that our choice of the constant $L$ and the induction assumption $\|\p_q^nh\|_{1+\alpha}\leq L^{n-2}(n-3)!$ for $3\leq n\leq m-1$ yield, via Lemma \ref{L:A3}, that
\begin{align}\label{S4}
\|\p_q^n((1+h_q^2)^{1/2})\|_\alpha\leq L^{n-3/2}(n-3)!\qquad\text{for all $3\leq n\leq m-1.$} 
\end{align}
Since \eqref{S1} and the induction assumption imply  
\begin{align}\label{S1'}
 \max\{ \|\p_q^nh_q\|_\alpha, \|\p_q^nh_p\|_\alpha\}\leq L^{n-3/2}(n-3)! \qquad\text{for $3\leq n\leq m-1,$}
\end{align}
we find together with \eqref{S4} and the relations \eqref{E:a1}-\eqref{E:a3} of Lemma \ref{L:A1} (with $r=3/2,$ $n_0=3$, and $N_0=m-1$) that
\begin{equation}\label{G:2}
\begin{aligned}
 &\max\left\{\left\|\p_q^n\left(1+h_q^2\right)^{3/2}\right\|_\alpha,\left\|\p_q^n\left(1+h_q^2\right)^{5/2}\right\|_\alpha\right\}\leq K_7L^{n-3/2}(n-3)!,\\
&\max\left\{\left\|\p_q^n\left(h_q\left(1+h_q^2\right)^{1/2}\right)\right\|_\alpha, \left\|\p_q^n\left(h_q\left(1+h_q^2\right)^{3/2}\right)\right\|_\alpha\right\}\leq K_7L^{n-3/2}(n-3)!
\end{aligned}
\end{equation}
for all $3\leq n\leq m-1.$
On the other hand, the relation    \eqref{S2}  and the estimates \eqref{E:a1}-\eqref{E:a2} of Lemma \ref{L:A1} (with $r=3/2,$ $n_0=3$, and $N_0=m-1$) yield
\begin{equation}\label{G:3}
\begin{aligned}
 &\max\left\{\left\|\p_q^n\left(1/h_p^2\right)\right\|_\alpha, \left\|\p_q^n\left(1/h_p^3\right)\right\|_\alpha\right\}\leq K_8L^{n-3/2}(n-3)!
\end{aligned}
\end{equation}
for all $3 \leq n\leq m-1.$
In view of \eqref{S1'}-\eqref{G:3}  and of
\begin{align}\label{S1''}
 \|\p_q^nh\|_\alpha\leq K_9L^{n-3/2}(n-3)! \qquad\text{for $3\leq n\leq m,$}
\end{align}
the $C^{1+\alpha}-$norm of the functions $\varphi_m^1$ and $\varphi_m^2$ can be estimated by the same quantity.
More precisely, we have 
\begin{align*}
 \|\varphi_m^1\|_{1+\alpha}\leq C_6\left\|\sum_{k=1}^{m-1} \begin{pmatrix}m\\k\end{pmatrix}(\p_q^k(1+h_q^2)^{5/2})\p_q^{m-k}\frac{1}{h_p^2}\right\|_\alpha,
\end{align*}
and we split the sum in the latter sum as follows
\[
\sum_{k=1}^{m-1} \begin{pmatrix}m\\k\end{pmatrix}(\p_q^k(1+h_q^2)^{5/2})\p_q^{m-k}\frac{1}{h_p^2}=\left(\sum_{k=1}^2+\sum_{k=3}^{m-3}+\sum_{k=m-2}^{m-1}\right)\begin{pmatrix}m\\k\end{pmatrix}(\p_q^k(1+h_q^2)^{5/2})\p_q^{m-k}\frac{1}{h_p^2}.
\]
Recalling \eqref{G:2} and \eqref{G:3}, we get  
\begin{align*}
 \left\|\left(\sum_{k=1}^2 +\sum_{k=m-2}^{m-1}\right)\begin{pmatrix}m \\k\end{pmatrix}(\p_q^k(1+h_q^2)^{5/2})\p_q^{m-k}\frac{1}{h_p^2}\right\|_\alpha\leq K_{10}L^{m-5/2}(m-3)!.
\end{align*}
When estimating the middle sum we take advantage of \eqref{arg}, \eqref{G:2}, and \eqref{G:3} to find
\begin{align*}
 \left\| \sum_{k= 3}^{m-3}\begin{pmatrix}m\\k\end{pmatrix}(\p_q^k(1+h_q^2)^{5/2})\p_q^{m-k}\frac{1}{h_p^2}\right\|_\alpha
 \leq& \sum_{k= 3}^{m-3}\begin{pmatrix}m\\k\end{pmatrix}\left\|\p_q^k(1+h_q^2)^{5/2}\right\|_\alpha\left\|\p_q^{m-k}\frac{1}{h_p^2}\right\|_\alpha\\
 \leq&K_{11} \sum_{k= 3}^{m-3}\begin{pmatrix}m \\k\end{pmatrix}L^{k-3/2}(k-3)! L^{m-k-3/2}(m-k-3)!\\
 \leq &K_{11}L^{m-3}(m-3)!\sum_{k= 3}^{m-3}\frac{m^3}{(k-2)^3(m-k-2)^3}\\
 \leq &K_{11}L^{m-3}(m-3)!.
\end{align*}
The arguments being also true when estimating $\varphi_m^2,$ we conclude that
\begin{align}\label{F3}
 \sum_{i=1}^2\|\varphi^i_m\|_\alpha \leq K_{12}L^{m-5/2}(m-3)!.
\end{align}

Finally, recalling \eqref{S1'}-\eqref{G:3}, one can easily see that the norms  $\|\varphi_m^i\|_{1+\alpha}$,  $i\in\{3,4,5\},$ may  be bounded by the same quantity.
Indeed, we have that
\begin{align*}
 \|\varphi_m^3\|_{1+\alpha}\leq&K_{13}\left\| \sum_{k=0}^{m-2} \begin{pmatrix}m-1\\k\end{pmatrix}(\p_q^{k+1}h_p)\p_q^{m-k-1}\frac{1}{h_p^3}\right\|_\alpha,
\end{align*}
and we split the sum on the right-hand side of the latter inequality as follows
\begin{align*}
  \sum_{k=0}^{m-2} \begin{pmatrix}m-1\\k\end{pmatrix}(\p_q^{k+1}h_p)\p_q^{m-k-1}\frac{1}{h_p^3}=\left(\sum_{k=0}^1+\sum_{k=2}^{m-4}+\sum_{k=m-3}^{m-2}\right)\begin{pmatrix}m-1\\k\end{pmatrix}(\p_q^{k+1}h_p)\p_q^{m-k-1}\frac{1}{h_p^3}.
\end{align*}
The relations \eqref{S1'} and \eqref{G:3} imply that
\begin{align*}
 \left\|\left(\sum_{k=0}^1+ \sum_{k=m-3}^{m-2}\right)\begin{pmatrix}m-1\\k\end{pmatrix}(\p_q^{k+1}h_p)\p_q^{m-k-1}\frac{1}{h_p^3}\right\|_\alpha\leq K_{14}L^{m-5/2}(m-3)!
\end{align*}
and 
\begin{align*}
 \left\| \sum_{k= 2}^{m-4}\begin{pmatrix}m-1\\k\end{pmatrix}(\p_q^{k+1}h_p)\p_q^{m-k-1}\frac{1}{h_p^3}\right\|_\alpha
 \leq& \sum_{k= 2}^{m-4}\begin{pmatrix}m-1\\k\end{pmatrix}\|\p_q^{k+1}h_p\|_\alpha\left\|\p_q^{m-k-1}\frac{1}{h_p^3}\right\|_\alpha\\
 \leq&K_{15} L^{m-3}\sum_{k= 2}^{m-4}\begin{pmatrix}m-1 \\k\end{pmatrix} (k-2)! (m-k-4)!\\
 \leq &K_{15}L^{m-3}(m-3)!\sum_{k= 2}^{m-4}\frac{m^2}{(k-1)^2(m-k-3)^3}\\
 \leq &K_{15}L^{m-3}(m-3)!,
\end{align*}
meaning that
\begin{align}\label{F4}
 \sum_{i=3}^5\|\varphi^i_m\|_\alpha \leq K_{16}L^{m-5/2}(m-3)!.
\end{align}

Gathering \eqref{IND}, \eqref{F1}, \eqref{F2}, \eqref{F3}, and \eqref{F4}, we conclude that 
\begin{align}\label{E:D}
\|\p_q^mh\|_{1+\alpha}\leq K_2(1+K_6+K_{12}+K_{16})L^{m-5/2}(m-3)!,
\end{align}
  the constants $K_i$ being independent of $m$ and $L$. Therefore, we may  require,   additionally to \eqref{E:C}, that the  
constant $L$ should also satisfy
\begin{align}\label{E:constr}
L\geq K_2^2(1+K_6+K_{12}+K_{16})^2.
\end{align}
This additional restriction and \eqref{E:D} lead to the desired conclusion.
 \end{proof}

 We conclude the paper with the proof of our main existence result.
\begin{proof}[Proof of Theorem \ref{MT}]
The proof of Theorem \ref{MT} follows by combining the assertions of the Lemmas \ref{L:1} and \ref{L:Re} and that of the Theorems \ref{T:LB} and   \ref{MT2}. 
\end{proof}
 
 \vspace{0.5cm}
\hspace{-0.5cm}{\large \sc Acknowledgement} A.-V. Matioc was supported by the   ERC Advanced  Grant  ``Nonlinear studies of water flows with vorticity'' (NWFV).

\end{document}